\newtheorem{theorem}{Theorem}[section] 
\newtheorem{lemma}[theorem]{Lemma} 
\newtheorem{definition}[theorem]{Definition} 
\newtheorem{example}[theorem]{Example} 
\newtheorem{proposition}[theorem]{Proposition} 
\newtheorem*{remark}{Remark}
\title{Phase Reduction of Limit Cycle Oscillators: A Tutorial Review with New Perspectives on Isochrons and an Outlook to Higher-Order Reductions}
\author{ \href{https://orcid.org/0000-0002-0508-152X}{\includegraphics[scale=0.06]{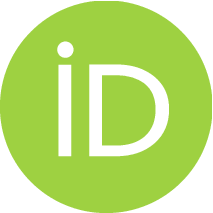}\hspace{1mm}Zeray Hagos Gebrezabher} \\
	Faculty of Mathematics and Statistics\\
	Mekelle University\\
	Mekelle, Ethiopia \\
	\texttt{zeray.hagos.geb@gmail.com} 
}
\date{}
\begin{document}
\maketitle

\begin{abstract}
The phase reduction technique is essential for studying rhythmic phenomena across various scientific fields. It allows the complex dynamics of high-dimensional oscillatory systems to be expressed by a single phase variable. This paper provides a detailed review and synthesis of phase reduction with two main goals. First, we develop a solid geometric framework for the theory by creating isochrons, which are the level sets of the asymptotic phase, using the Graph Transform theorem. We show that isochrons form an invariant, continuous structure of the basin of attraction of a stable limit cycle, helping to clarify the concept of the asymptotic phase. Second, we systematically explain how to derive the first-order phase reduction for weakly perturbed and coupled systems. In the end, we discuss the limitations of the first-order approach, particularly its restriction to very small perturbations and the issue of vanishing coupling terms in certain networks. We finish by outlining the framework and importance of higher-order phase reductions. This establishes a clear link from classical theory to modern developments and sets the stage for a more in-depth discussion in a future publication. 
\end{abstract}

\keywords{Phase reduction \and Limit cycle oscillators \and Isochrons \and Asymptotic phase \and Synchronization \and Coupled oscillators \and Higher-order reductions}

\section{Introduction}
\label{sec:introduction}

The coordinated firing of cardiac pacemaker cells \citep{Leon1988Machey, Winfree2001} and the synchronised flashing of fireflies \citep{Newman2006} exemplify the ubiquitous presence of stable rhythmic behaviour in biological and physical systems.  These self-sustained oscillations are mathematically represented by stable limit cycles in nonlinear dynamical systems.  Despite the complexity of the underlying dynamics, the long-term behaviour near the cycle can frequently be characterised by a single phase variable.  The robust \emph{phase reduction} technique utilises this understanding to simplify oscillatory models, thereby enhancing the examination of synchronisation, entrainment, and stimulus response \citep{kuramoto1984chemical, nakao2016phase, pikovsky2003synchronization, Pietras2019}.

The theoretical underpinnings of phase reduction were established by \cite{winfree1967biological} in his exploration of biological rhythms and \cite{kuramoto1984chemical} regarding chemical oscillations.  The primary elements of this theory are \emph{isochrons}, which are submanifolds of points within the basin of attraction that possess identical asymptotic phases \cite{guckenheimer1975isochrons}.  The gradient of the asymptotic phase function, referred to as the Phase Response Curve (PRC) or sensitivity function \(Z(\theta)\), measures the oscillator's sensitivity to perturbations and comprehensively delineates its dynamics under weak forcing \citep{winfree1967biological, ermentrout1996type}.

Despite its prevalent application, a comprehensive analysis of the existence and structure of isochrons is frequently confined to specialised mathematical literature.  In numerous practical scenarios, the presence of a smooth asymptotic phase function is presumed without exploring the underlying geometry that facilitates it.  Moreover, the conventional first-order phase reduction is applicable solely for negligible perturbations.  For systems with moderate coupling strength or specific resonance conditions, higher-order corrections are essential to capture the true dynamics of the network \citep{Leon2019PhyReviewE, gengel2021high, ashwin2016phase, Kralemann2011reconstruction, Wilson2016Moehlis, Daido1992ProgressivePhys, Rosenblum2007Arkady, Kurebayashi2013Nakao, Pyragas2015PhysRevE, eddie2022NatureComm}, yet their derivation is often computationally intensive and not widely demonstrated.

This paper addresses important gaps by providing a tutorial review of phase reduction that emphasizes geometric understanding and precise mathematics. We assume readers have a basic grasp of ordinary differential equations, stability theory, and smooth manifolds. We move from basic ideas to more advanced topics like invariant foliations and normal hyperbolicity. Instead of an all-encompassing overview, we present an independent analysis based on a specific geometric framework. This perspective builds on the research from my doctoral dissertation \cite{gebrezabher2023thesis}, where I formulated and applied these principles to network reconstruction. Our contributions are threefold:

First, we provide a solid foundation by using the Graph Transform theorem to prove the existence and properties of isochrons for an exponentially stable limit cycle. This establishes a strong geometric basis and clearly shows that isochrons create an invariant foliation of the basin of attraction. Second, we present a systematic derivation of the first-order phase reduction for weakly perturbed systems and coupled oscillator networks, highlighting the importance of the phase sensitivity function \(Z(\theta)\) and the concept of averaging. This forms the theoretical basis for the network reconstruction methods outlined in \citep{gebrezabher2023thesis}. Finally, we critically assess the limitations of the first-order reduction to encourage future research. We demonstrate that its effectiveness is restricted to very small perturbations and fails for systems with certain symmetries or resonances that cancel out leading-order interactions. This discussion seamlessly leads into the ideas and challenges of higher-order phase reductions, setting the stage for a detailed exploration in a future publication.

The structure of the paper is as follows.  The required mathematical preliminaries are covered in Section \ref{sec:preliminaries}.  We describe our geometric construction of isochrons and the asymptotic phase in Section \ref{sec:geometry-asymphase}.  This is applied to first-order phase reduction in Section \ref{sec:phase-reduction}, and to networks in Section \ref{sec:applications}.  We address the drawbacks of the first-order method and present the idea of higher-order reductions in Section \ref{sec:limitations}.  In Section \ref{sec:conclusion}, we wrap up with a prediction.

\section{Mathematical Preliminaries}\label{sec:preliminaries}

To ensure self-containedness, we state some fundamental results that will be used throughout this paper. The systematic treatment of these mathematical foundations follows the approach developed in \citep{gebrezabher2023thesis}, which provides additional context and applications.

\begin{theorem}[\textbf{Picard-Lindelöf}]
Let \(D\subset\mathbb{R}\times\mathbb{R}^{n}\) be a closed rectangle with \((t_{0},x_{0})\in \text{int}(D)\). Let \(f:D\rightarrow\mathbb{R}^{n}\) be a function that is continuous in \(t\) and Lipschitz continuous in \(y\). Then, there exists some \(\varepsilon>0\) such that the initial value problem
\[
\dot{x}=f(t,x),\quad x(t_{0})=x_{0}
\]
has a unique solution \(x(t)\) on the interval \([t_{0}-\varepsilon,t_{0}+\varepsilon]\).
\end{theorem}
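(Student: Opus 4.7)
The plan is to recast the initial value problem as a fixed point problem for an integral operator and then apply the Banach contraction principle. First I would integrate the differential equation from $t_{0}$ to $t$, so that any $C^{1}$ solution of $\dot{x}=f(t,x)$ with $x(t_{0})=x_{0}$ is equivalent to a continuous solution of the Volterra equation
\[
x(t)=x_{0}+\int_{t_{0}}^{t}f(s,x(s))\,ds.
\]
The right-hand side defines a natural operator $T$ acting on continuous curves, and the goal is to show that $T$ has a unique fixed point on a small enough time interval.

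Next I would set up the function space. Since $(t_{0},x_{0})\in\mathrm{int}(D)$, I can choose constants $a,b>0$ such that the cylinder $R=[t_{0}-a,t_{0}+a]\times \overline{B_{b}(x_{0})}$ is contained in $D$. Continuity of $f$ on the compact set $R$ provides a bound $M=\sup_{R}\|f\|$, and the Lipschitz hypothesis provides a constant $L$ with $\|f(t,x)-f(t,y)\|\le L\|x-y\|$ on $R$. I would then work in the complete metric space $X$ of continuous functions $x:[t_{0}-\varepsilon,t_{0}+\varepsilon]\to \overline{B_{b}(x_{0})}$ with the uniform norm, for an $\varepsilon>0$ to be chosen.

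The two analytic steps are to verify that $T$ maps $X$ into itself and that $T$ is a contraction. For self-mapping, a direct estimate gives $\|Tx(t)-x_{0}\|\le M|t-t_{0}|\le M\varepsilon$, so I need $\varepsilon\le b/M$. For contraction, a similar estimate yields $\|Tx-Ty\|_{\infty}\le L\varepsilon\,\|x-y\|_{\infty}$, so I need $\varepsilon<1/L$. Taking
\[
\varepsilon<\min\!\left\{a,\;\tfrac{b}{M},\;\tfrac{1}{L}\right\}
\]
simultaneously secures both properties. Banach's fixed point theorem then produces a unique $x\in X$ with $Tx=x$.

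Finally I would recover the original statement: the fixed point $x$ is continuous, and since $f(\cdot,x(\cdot))$ is continuous, the fundamental theorem of calculus shows that $x$ is actually $C^{1}$ and satisfies $\dot{x}=f(t,x)$ with $x(t_{0})=x_{0}$. Uniqueness within $X$ transfers to uniqueness among all solutions staying in the cylinder, and a short argument (or the use of Grönwall's inequality on the integral identity) upgrades this to uniqueness among all solutions on $[t_{0}-\varepsilon,t_{0}+\varepsilon]$. The main subtlety, rather than any single calculation, is the balancing of the three constants $a$, $b/M$, and $1/L$ when selecting $\varepsilon$; everything downstream is routine once the cylinder and the contraction radius are chosen compatibly.
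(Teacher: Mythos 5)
Your proof is correct and is the standard Picard--Lindel\"of argument (Volterra integral reformulation, contraction of the integral operator on a suitably shrunk cylinder, Banach fixed point theorem); the paper itself offers no proof, deferring entirely to \cite[Theorem 5.1]{hartman2002ordinary}, whose successive-approximations argument is essentially the one you give. The one point worth keeping an eye on is the uniqueness upgrade from ``unique in $X$'' to ``unique among all solutions,'' which you correctly flag and which follows from the a priori bound $\|x(t)-x_{0}\|\le M|t-t_{0}|<b$ forcing every solution to stay in the cylinder.
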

\begin{proof}
See \cite[Theorem 5.1]{hartman2002ordinary}.
\end{proof}
\begin{remark}
This theorem guarantees the existence and uniqueness of the flow \(\varphi(t, x)\), which is the fundamental object for defining the asymptotic phase and isochrons.
\end{remark}

\begin{lemma}[\textbf{Gronwall's Inequality}]\label{lem:Gronwall_lemma}
Consider \(U\subset\mathbb{R}^{+}\) and let \(u:U\rightarrow\mathbb{R}\) be a continuous and nonnegative function. Suppose there exist \(C\geq 0\) and \(K\geq 0\) such that
\[
u(t)\leq C+\int_{0}^{t}Ku(s)\,ds
\]
for all \(t\in U\), then
\[
u(t)\leq Ce^{Kt}.
\]
\end{lemma}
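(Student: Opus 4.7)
The plan is to convert the integral inequality for $u$ into a differential inequality for a suitable majorant, and then solve the latter by an integrating factor. Define
\[
v(t) := C + \int_0^t K u(s)\, ds,
\]
so that $v$ is continuously differentiable (since $u$ is continuous) with $v(0)=C$ and $v'(t)=Ku(t)$, and the hypothesis reads $u(t)\le v(t)$. Substituting this into the expression for $v'$ immediately yields the differential inequality $v'(t)\le Kv(t)$ with $v(0)=C$.

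The main step is to integrate this inequality in closed form. Multiplying by $e^{-Kt}$ rewrites it as
\[
\frac{d}{dt}\bigl(e^{-Kt}v(t)\bigr) = e^{-Kt}\bigl(v'(t)-Kv(t)\bigr) \le 0,
\]
so $e^{-Kt}v(t)$ is nonincreasing. Evaluating at $t=0$ gives $e^{-Kt}v(t)\le v(0)=C$, i.e.\ $v(t)\le Ce^{Kt}$, and since $u(t)\le v(t)$ by construction this delivers the desired bound $u(t)\le Ce^{Kt}$. The cases $K=0$ and $C=0$ are degenerate but harmless: when $K=0$ the claim is immediate from the hypothesis, and when $C=0$ one can first apply the inequality already proved to the strictly positive perturbed bound $C+\varepsilon$, obtaining $u(t)\le\varepsilon e^{Kt}$, and then let $\varepsilon\to 0^+$.

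I do not expect a serious obstacle here; the only point that requires a little care is to apply the differential inequality to the smooth majorant $v$ rather than to $u$ itself, since $u$ is only assumed continuous and may fail to be differentiable. Everything else is a direct computation with the integrating factor.
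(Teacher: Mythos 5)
Your proof is correct: passing to the smooth majorant $v(t)=C+\int_0^t Ku(s)\,ds$, deriving $v'\le Kv$, and integrating with the factor $e^{-Kt}$ is the standard argument, and it is essentially the one in the reference (Hartman, Lemma 2.2) that the paper cites in lieu of giving a proof. The only remark worth making is that your $\varepsilon$-perturbation for the case $C=0$ is unnecessary, since the integrating-factor computation never divides by $v$ and already covers that case.
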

\begin{proof}
See \cite[Lemma 2.2]{hartman2002ordinary}.
\end{proof}
\begin{remark}
Gronwall's inequality is instrumental in proving the exponential stability of the limit cycle and the properties of the isochron foliation, particularly in demonstrating the contraction properties underlying the Graph Transform method.
\end{remark}

\begin{theorem}[Normal Hyperbolicity \citep{hirsch2012differential, fenichel1979geometric}]\label{thm:normal_hyperbolicity}
Let \(M\) be a compact, connected, invariant manifold for the flow \(\varphi(t,x)\) generated by \(\dot{x} = f(x)\), where \(f \in C^1\). The manifold \(M\) is \textbf{normally hyperbolic} if the linearized flow \(D\varphi(t,x)\) splits into continuous stable, unstable, and center bundles:
\[
T_x\mathbb{R}^n = E^s_x \oplus E^u_x \oplus T_xM,
\]
with the following growth conditions:
\begin{itemize}
    \item There exist constants \(C > 0\), \(\lambda_s < 0 < \lambda_u\) such that for all \(x \in M\):
    \begin{align*}
    \|D\varphi(t,x)v\| &\leq Ce^{\lambda_s t}\|v\|, \quad \text{for } v \in E^s_x, \, t \geq 0, \\
    \|D\varphi(t,x)v\| &\leq Ce^{\lambda_u t}\|v\|, \quad \text{for } v \in E^u_x, \, t \leq 0.
    \end{align*}
    \item The flow on \(M\) is dominated by the normal dynamics: \(\max(\lambda_s, -\lambda_u) < \lambda_c\), where \(\lambda_c\) characterizes the weakest contraction/expansion along \(M\).
\end{itemize}
If \(M\) is normally hyperbolic, then it persists under \(C^1\)-small perturbations and has stable and unstable manifolds \(W^s(M)\), \(W^u(M)\) that are \(C^1\) close to those of the unperturbed system.
\end{theorem}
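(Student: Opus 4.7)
The plan is to realize the perturbed invariant manifold as the unique fixed point of a graph transform (Hadamard) operator on a space of Lipschitz sections of the normal bundle of $M$, the contraction property being driven by the spectral gap condition and quantified via Gronwall's inequality (Lemma \ref{lem:Gronwall_lemma}).

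First, by compactness of $M$ and continuity of the splitting $E^s \oplus E^u \oplus TM$, there is a tubular neighborhood $U \supset M$ on which adapted (Fermi-type) coordinates $(x, y^s, y^u)$ identify points of $U$ with a $\rho$-disk bundle inside $N = E^s \oplus E^u$ over $M$. In these coordinates the unperturbed flow has the form $\dot x = f_0(x) + O(|y|)$, $\dot y^s = A_s(x) y^s + O(|y|^2)$, $\dot y^u = A_u(x) y^u + O(|y|^2)$, with linear parts inheriting the rates $\lambda_s, \lambda_u$ from normal hyperbolicity. I would then introduce the perturbation $f + \varepsilon g$ and work in the complete metric space $\mathcal{X}$ of Lipschitz sections $\sigma : M \to N$ with $\|\sigma\|_{C^0} < \rho$ and Lipschitz constant bounded by a fixed $L$ chosen in terms of the spectral gap.

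The graph transform $\mathcal{G}$ is defined by flowing the graph $\Gamma_\sigma = \{(x, \sigma(x)) : x \in M\}$ forward by the perturbed time-$T$ map and reprojecting onto a section over $M$ via the tubular coordinates. Using the rate estimates and Gronwall's inequality together with $\max(\lambda_s, -\lambda_u) < \lambda_c$, one shows that for $T$ sufficiently large and $\varepsilon$ sufficiently small, $\mathcal{G}$ maps $\mathcal{X}$ into itself and is a $C^0$-contraction. The Banach fixed-point theorem then yields a unique $\sigma_\varepsilon$, whose graph is the persistent invariant manifold $M_\varepsilon$; continuity of fixed points in parameters gives $C^0$-closeness to $M$. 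To upgrade to $C^1$-closeness, I would apply the fiber contraction theorem to the lift of $\mathcal{G}$ acting on pairs (section, tangent-plane field): the base map is already a contraction, and the spectral gap makes the fiber map contract as well, so the resulting fixed point supplies the tangent bundle of $M_\varepsilon$. The local stable and unstable manifolds $W^s(M_\varepsilon)$ and $W^u(M_\varepsilon)$ are then constructed by an analogous graph transform over $M_\varepsilon$ within the stable (respectively unstable) subbundles, where the contraction rate comes directly from $\lambda_s < 0$ (respectively from $\lambda_u > 0$ after time reversal).

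The main obstacle is controlling the spectral gap precisely enough throughout the argument. The inequality $\max(\lambda_s, -\lambda_u) < \lambda_c$ is what forces the Lipschitz constant of $\mathcal{G}$ to remain strictly below one, and it has to survive when the flow shears along $M$ and when the perturbation introduces off-diagonal coupling between the $x$- and $y$-components. The $C^1$ upgrade is especially delicate, since differentiating $\mathcal{G}$ brings in $Df$, so the fiber contraction estimate needs the gap with a definite margin; this is precisely why normal hyperbolicity in its full quantitative form, rather than just the existence of a hyperbolic splitting, is indispensable.
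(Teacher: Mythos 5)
The paper does not prove this theorem itself; it defers entirely to \cite{hirsch2012differential} and \cite{fenichel1979geometric}, noting only that the proof uses the Graph Transform method. Your sketch (adapted tubular coordinates, graph transform on a space of Lipschitz sections contracted via the spectral gap, the fiber contraction theorem for the $C^1$ upgrade, and a second graph transform over $M_\varepsilon$ for $W^s$ and $W^u$) is a faithful outline of exactly that standard argument, so it takes essentially the same approach as the proof the paper cites.
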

\begin{proof}
See \cite{hirsch2012differential} and \cite{fenichel1979geometric} for the complete proof using the Graph Transform method.
\end{proof}
\begin{remark}
A stable limit cycle is a special case of a normally hyperbolic manifold where \(E^u_x = \{0\}\) (no unstable directions). The isochrons correspond to the stable manifolds of points on the limit cycle, and normal hyperbolicity ensures their smooth persistence under small perturbations. This provides the theoretical foundation for the phase reduction approach.
\end{remark}

\begin{theorem}[Periodic Averaging \citep{sanders1985averaging, sanders2007averaging}]\label{thm:periodic-averaging}
Consider a perturbed system of the form
\[
\dot{x}=\epsilon f(x,t,\epsilon);\quad x\in U\subset\mathbb{R}^{n},\quad 0\leq \epsilon \ll 1,
\]
where \(f:\mathbb{R}^{n}\times\mathbb{R}\times\mathbb{R}^{+}\rightarrow\mathbb{R}^{n}\) is \(C^{k},k\geq 2\), bounded on bounded set \(U\), and of period \(T>0\) in \(t\). The associated \enquote{autonomous averaged system} is defined as
\[
\dot{y}=\epsilon\frac{1}{T}\int_{0}^{T}f(y,t,0)\,dt=:\epsilon\bar{f}(y).
\]
Then there exists a \(C^{k}\) change of coordinates \(x=y+\epsilon w(y,t,\epsilon)\) under which the system becomes
\[
\dot{y}=\epsilon\bar{f}(y)+\epsilon^{2}f_{1}(y,t,\epsilon),
\]
where \(f_{1}\) is of period \(T\) in \(t\). Moreover
\begin{itemize}
    \item[(i)] If \(x(t)\) and \(y(t)\) are solutions of the original and averaged systems starting at \(x_{0}\) and \(y_{0}\), respectively, at \(t=0\), and \(|x_{0}-y_{0}|=O(\epsilon)\), then \(|x(t)-y(t)|=O(\epsilon)\) on a time scale \(t\sim 1/\epsilon\).
    \item[(ii)] If \(p_{0}\) is a hyperbolic fixed point of the averaged system then there exists \(\epsilon_{0}>0\) such that, for all \(0<\epsilon\leq\epsilon_{0}\), the original system possesses a unique hyperbolic periodic orbit \(\gamma_{\epsilon}(t)=p_{0}+O(\epsilon)\) of the same stability type as \(p_{0}\).
\end{itemize}
\end{theorem}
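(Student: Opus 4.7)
The plan is to introduce a near-identity change of coordinates $x = y + \epsilon w(y,t,\epsilon)$ in which $w$ absorbs the fast oscillations of $f$, leaving only the autonomous mean dynamics at leading order. The natural candidate is
\[
w(y,t,0) \;=\; \int_{0}^{t}\bigl[f(y,s,0) - \bar{f}(y)\bigr]\,ds,
\]
which is $C^{k}$, bounded on bounded sets, and $T$-periodic in $t$ because the integrand has zero mean over a period. I would extend $w$ smoothly in $\epsilon$ and check that $I + \epsilon\,D_{y}w$ is invertible for $\epsilon$ small, so the implicit relation $\dot{x}=\dot{y}+\epsilon(D_{y}w\,\dot{y}+\partial_{t}w)$ can be solved for $\dot{y}$.

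Substituting into $\dot{x}=\epsilon f(x,t,\epsilon)$ and using $\partial_{t}w(y,t,0)=f(y,t,0)-\bar{f}(y)$ together with a first-order Taylor expansion of $f$ around $y$, the leading oscillatory pieces cancel and one obtains $\dot{y}=\epsilon\bar{f}(y)+\epsilon^{2}f_{1}(y,t,\epsilon)$ with $f_{1}$ still $T$-periodic in $t$ and $C^{k-1}$; the routine but tedious bookkeeping of Taylor remainders is where the $\epsilon^{2}$ term originates. For part (i) I would set $e(t)=x(t)-y(t)$, use $|x(0)-y(0)|=O(\epsilon)$ together with the Lipschitz bound on $\bar{f}$ on a fixed compact set, and derive an integral inequality of the form
\[
|e(t)| \;\le\; C_{0}\epsilon + \epsilon L\int_{0}^{t}|e(s)|\,ds + \epsilon^{2}M t.
\]
Gronwall's inequality (Lemma \ref{lem:Gronwall_lemma}) then yields $|e(t)|\le(C_{0}\epsilon+\epsilon^{2}Mt)e^{\epsilon L t}$, which is $O(\epsilon)$ for $t\lesssim 1/\epsilon$.

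For part (ii) I would work with the time-$T$ Poincar\'e map $P_{\epsilon}$ of the original system. Integrating the averaged equation over one period gives $P_{\epsilon}(y_{0})=y_{0}+\epsilon T\bar{f}(y_{0})+O(\epsilon^{2})$, so fixed points of $P_{\epsilon}$ correspond to zeros of the map $G(y,\epsilon):=\bar{f}(y)+O(\epsilon)$. Since $p_{0}$ is hyperbolic, $D\bar{f}(p_{0})$ is invertible, and the implicit function theorem produces a unique $p_{\epsilon}=p_{0}+O(\epsilon)$ with $G(p_{\epsilon},\epsilon)=0$ for $0<\epsilon\le\epsilon_{0}$. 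This $p_{\epsilon}$ is the initial condition of a $T$-periodic orbit $\gamma_{\epsilon}$ of the full system; the eigenvalues of $DP_{\epsilon}(p_{\epsilon})$ are $1+\epsilon T\mu_{i}+O(\epsilon^{2})$ where $\mu_{i}$ are the eigenvalues of $D\bar{f}(p_{0})$, so $\gamma_{\epsilon}$ inherits the hyperbolic stability type of $p_{0}$ for small $\epsilon$.

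The principal obstacle I anticipate is maintaining all estimates \emph{uniformly} in $\epsilon\in[0,\epsilon_{0}]$ and in $y$ over a compact set containing the trajectory: bounding $w$, $D_{y}w$, and $f_{1}$; inverting $I+\epsilon D_{y}w$; and preventing the Gronwall exponent from blowing up before time $1/\epsilon$. A related subtlety is that the Taylor remainder in the derivation of $f_{1}$ produces an expression involving $w$ itself, so one must argue that successive substitutions do not amplify constants beyond the stated order --- this is the book-keeping step where attention to the domain of validity is essential.
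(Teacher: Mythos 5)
The paper does not actually prove this theorem --- its ``proof'' is a citation to Sanders and Verhulst --- and your sketch is exactly the standard argument from that reference: the near-identity transformation generated by the zero-mean primitive $w(y,t,0)=\int_0^t[f(y,s,0)-\bar f(y)]\,ds$, a Gronwall estimate for the $O(\epsilon)$ validity on $t\sim 1/\epsilon$, and the time-$T$ Poincar\'e map plus the implicit function theorem (using invertibility of $D\bar f(p_0)$ from hyperbolicity) for part (ii), so your outline is correct and consistent with the paper's intended source. The only minor caveats are that your Gronwall step invokes the version with a nondecreasing inhomogeneity $C_0\epsilon+\epsilon^2 M t$ rather than the constant form stated in Lemma~\ref{lem:Gronwall_lemma} (harmless, since that term is uniformly $O(\epsilon)$ for $t\lesssim 1/\epsilon$), and that the error comparison in (i) is cleanest if run first between the transformed variable and the averaged solution and then pulled back through $x=y+\epsilon w$, which itself contributes only $O(\epsilon)$.
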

\begin{proof}
See \citep{sanders1985averaging}.
\end{proof}
\begin{remark}
The averaging theorem is the workhorse for deriving the averaged phase equations for weakly forced and coupled oscillators, as shown in Sections \ref{sec:phase-reduction} and \ref{sec:applications}. It justifies the separation of fast and slow phase dynamics.
\end{remark}

\subsection{Averaging for Almost-Periodic Vector Fields}

The classical averaging theorem applies to periodic forcing, but many physical systems exhibit quasi-periodic or almost-periodic behavior. We extend the averaging framework using Bohr's theory of almost-periodic functions.

\begin{definition}[Almost-Periodic Function \cite{bohr1947almost}]\label{def:almost_periodic}
A continuous function \(f: \mathbb{R} \to \mathbb{R}^n\) is called \textbf{almost-periodic} if for every \(\epsilon > 0\), there exists a relatively dense set \(P_\epsilon \subset \mathbb{R}\) (called \(\epsilon\)-almost periods) such that
\[
\|f(t + \tau) - f(t)\| < \epsilon \quad \text{for all } t \in \mathbb{R} \text{ and } \tau \in P_\epsilon.
\]
Equivalently, \(f\) is almost-periodic if the family of translates \(\{f(\cdot + \tau) : \tau \in \mathbb{R}\}\) is precompact in the space of bounded continuous functions equipped with the supremum norm.
\end{definition}

\begin{definition}[Mean Value \cite{bohr1947almost}]\label{def:mean_value}
For an almost-periodic function \(f\), the \textbf{mean value} exists and is defined by
\[
M[f] = \lim_{T \to \infty} \frac{1}{T} \int_{t_0}^{t_0+T} f(t)\,dt,
\]
which is independent of \(t_0 \in \mathbb{R}\).
\end{definition}

\begin{definition}[Frequency Module \cite{bohr1947almost}]\label{def:frequency_module}
The \textbf{frequency module} \(\text{Mod}(f)\) of an almost-periodic function \(f\) is the set of all real numbers that can be approximated by integer linear combinations of the Fourier exponents of \(f\). If \(f\) has Fourier series
\[
f(t) \sim \sum_{k=1}^\infty a_k e^{i\lambda_k t},
\]
then \(\text{Mod}(f)\) is the smallest additive subgroup of \(\mathbb{R}\) containing all \(\lambda_k\).
\end{definition}

\begin{lemma}[Averaging Lemma for Almost-Periodic Systems \cite{sanders2007averaging, bogoliubov1961asymptotic}]\label{lem:almost_periodic_avg}
Consider the system
\[
\dot{x} = \epsilon f(x,t,\epsilon), \quad x \in \mathbb{R}^n, \quad 0 < \epsilon \ll 1,
\]
where \(f\) is almost-periodic in \(t\) uniformly in \(x\) on compact sets, and \(f \in C^k\), \(k \geq 2\). Define the averaged vector field
\[
\bar{f}(x) = M[f(x,\cdot,0)] = \lim_{T \to \infty} \frac{1}{T} \int_0^T f(x,t,0)\,dt.
\]
Then there exists an almost-periodic near-identity transformation \(x = y + \epsilon w(y,t,\epsilon)\) such that the system becomes
\[
\dot{y} = \epsilon \bar{f}(y) + \epsilon^2 f_1(y,t,\epsilon),
\]
where \(f_1\) is almost-periodic in \(t\). Moreover, if \(x(t)\) and \(y(t)\) are solutions of the original and averaged systems with \(|x(0) - y(0)| = O(\epsilon)\), then \(|x(t) - y(t)| = O(\epsilon)\) on time scales \(t \sim 1/\epsilon\).
\end{lemma}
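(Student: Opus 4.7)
The plan is to extend the classical Krylov–Bogoliubov construction to the almost-periodic setting by building a near-identity change of coordinates $x = y + \epsilon w(y,t,\epsilon)$ that removes the oscillatory part of $f(\cdot,t,0)$ to leading order. Writing $g(y,t) := f(y,t,0) - \bar{f}(y)$, one observes that $g$ is almost-periodic in $t$ with mean value zero (by Definition \ref{def:mean_value}), so the natural candidate for the generator is the indefinite integral $w(y,t,0) = \int_0^t g(y,s)\,ds$. The central analytic question is whether this primitive is bounded and almost-periodic in $t$, uniformly in $y$ on compact sets.

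First I would establish the key auxiliary fact that if $g$ is almost-periodic with zero mean and its Fourier exponents are bounded away from zero, then $W(t) = \int_0^t g(s)\,ds$ is again almost-periodic; this is Bohr's classical criterion. Using the Fourier expansion $g(y,t) \sim \sum_k a_k(y) e^{i\lambda_k t}$ with $\lambda_k \neq 0$, the formal primitive is $W(y,t) \sim \sum_k \frac{a_k(y)}{i\lambda_k} e^{i\lambda_k t}$; uniform convergence follows from the $C^k$-regularity of $f$ (which forces decay of $a_k(y)$) combined with Bochner–Fej\'er approximation in the frequency module $\text{Mod}(f)$ of Definition \ref{def:frequency_module}. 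This produces a bounded, almost-periodic $w(y,t,\epsilon)$ with bounded derivatives in $y$.

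Next I would substitute $x = y + \epsilon w(y,t,\epsilon)$ into $\dot{x} = \epsilon f(x,t,\epsilon)$, Taylor-expand $f(y + \epsilon w,t,\epsilon)$ about $y$, and use the defining identity $\partial_t w(y,t,0) = g(y,t) = f(y,t,0) - \bar{f}(y)$ to cancel the $O(\epsilon)$ oscillatory terms. Solving the implicit equation for $\dot{y}$, which is possible for $\epsilon$ small because $I + \epsilon \partial_y w$ is invertible by a Neumann series, yields
\[
\dot{y} = \epsilon \bar{f}(y) + \epsilon^2 f_1(y,t,\epsilon),
\]
with $f_1$ almost-periodic in $t$ since sums, products, and smooth compositions preserve almost-periodicity.

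Finally, the closeness estimate on the $O(1/\epsilon)$ timescale follows by comparing $y(t)$ with the solution $\tilde{y}(t)$ of the unperturbed averaged system $\dot{\tilde{y}} = \epsilon \bar{f}(\tilde{y})$. Writing $e(t) = y(t) - \tilde{y}(t)$ and using the Lipschitz bound on $\bar{f}$ together with the uniform bound on $\epsilon^2 f_1$, Gronwall's inequality (Lemma \ref{lem:Gronwall_lemma}) produces $|e(t)| = O(\epsilon)$ for $t \lesssim 1/\epsilon$; combined with the $O(\epsilon)$ closeness of $x$ to $y$ through the near-identity transformation, this gives the claimed bound. The main obstacle is unquestionably the first technical step: ensuring that the primitive $w$ is bounded and almost-periodic when the frequency module $\text{Mod}(f)$ may accumulate near zero, since small-divisor phenomena can in principle destroy boundedness. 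Overcoming this, either via a Diophantine-type separation on the Fourier exponents or via the Bochner–Fej\'er uniform approximation, is precisely what makes the almost-periodic extension strictly harder than the periodic case handled in Theorem \ref{thm:periodic-averaging}.
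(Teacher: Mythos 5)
Your overall strategy --- solving the homological equation $\partial_t w(y,t,0) = f(y,t,0) - \bar f(y)$ for the generator, inverting $I + \epsilon\,\partial_y w$ by a Neumann series, and closing with a Gronwall comparison --- is exactly the route the paper gestures at; note that the paper does not actually carry out the proof but defers entirely to \citep{sanders2007averaging} and \citep{bogoliubov1961asymptotic}. Your substitution step and the final $O(\epsilon)$ Gronwall estimate, \emph{conditional on} a bounded almost-periodic $w$, are correct and standard.

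The gap is in the step you yourself identify as the crux, and your proposed resolution does not close it. You assert that the $C^k$-regularity of $f$ ``forces decay of $a_k(y)$'' sufficient to make the formally integrated series $\sum_k a_k(y)e^{i\lambda_k t}/(i\lambda_k)$ converge uniformly. That is not true in the almost-periodic setting: smoothness in $t$ controls the size of $a_k$ relative to large $|\lambda_k|$, but says nothing about Fourier exponents $\lambda_k$ accumulating at $0$, which is exactly where the divisors $1/\lambda_k$ blow up. Bochner--Fej\'er summation gives uniform approximation of $g$ by trigonometric polynomials but does not bound its primitive. By Bohr's theorem, the primitive $W(y,t)=\int_0^t g(y,s)\,ds$ of a zero-mean almost-periodic function is almost-periodic if and only if it is bounded, and in general it need not be bounded (it can grow sublinearly in $t$). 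Without an additional hypothesis --- either a separation or Diophantine condition on $\mathrm{Mod}(f)$ keeping the nonzero exponents away from $0$, or the direct assumption $\sup_{t}\bigl\|\int_0^t g(y,s)\,ds\bigr\|<\infty$ uniformly on compacts --- the bounded almost-periodic $w$ you need does not exist, and the general Bogoliubov averaging theorem then only yields $|x(t)-y(t)| = O(\delta(\epsilon))$ with $\delta(\epsilon)=\sup_{0\le t\le L/\epsilon}\,\epsilon\bigl\|\int_0^t g(y,s)\,ds\bigr\|$, which is $o(1)$ but can be strictly worse than $O(\epsilon)$. To repair the argument you must either add such a hypothesis (which the lemma as stated does not contain) or weaken the conclusion to the $o(1)$ estimate; appealing to the regularity of $f$ alone does not dispose of the small-divisor problem.
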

\begin{proof}
The proof follows from constructing the transformation via the homological equation and using the properties of almost-periodic functions. See \citep{sanders2007averaging} and \citep{bogoliubov1961asymptotic} for details.
\end{proof}
\begin{remark}
This extension is crucial for phase reduction in systems with quasi-periodic forcing or coupling, which commonly occur in multi-frequency oscillatory networks. The almost-periodic averaging ensures that our phase reduction framework applies not only to simple periodic perturbations but also to more complex, multi-frequency interactions.
\end{remark}

\begin{theorem}[\textbf{Implicit Function Theorem}]
Let \(f:\mathbb{R}^{n+m}\rightarrow\mathbb{R}^{m}\) be a continuously differentiable function, and let \(\mathbb{R}^{n+m}\) have coordinates \((\boldsymbol{x},\boldsymbol{y})\). Fix a point \((\boldsymbol{a},\boldsymbol{b})\) with \(f(\boldsymbol{a},\boldsymbol{b})=\boldsymbol{0}\). If the Jacobian matrix \(J_{f,y}(\boldsymbol{a},\boldsymbol{b})\) is invertible, then there exists an open set \(U\subset\mathbb{R}^{n}\) containing \(\boldsymbol{a}\) such that there exists a continuously differentiable function \(g:U\rightarrow\mathbb{R}^{m}\) such that \(g(\boldsymbol{a})=\boldsymbol{b}\), and \(f(\boldsymbol{x},g(\boldsymbol{x}))=\boldsymbol{0}\) for all \(\boldsymbol{x}\in U\).
\end{theorem}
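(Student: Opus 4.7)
The plan is to reduce the statement to the Inverse Function Theorem by enlarging $f$ into a self-map of a space of the same dimension. First, I would introduce the auxiliary map $F:\mathbb{R}^{n+m}\rightarrow\mathbb{R}^{n+m}$ defined by $F(\boldsymbol{x},\boldsymbol{y})=(\boldsymbol{x},f(\boldsymbol{x},\boldsymbol{y}))$. Since $f$ is $C^{1}$, so is $F$. Its total Jacobian at $(\boldsymbol{a},\boldsymbol{b})$ is block lower-triangular, with the $n\times n$ identity in the upper-left block (from differentiating the $\boldsymbol{x}$ coordinate) and $J_{f,y}(\boldsymbol{a},\boldsymbol{b})$ in the lower-right block. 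Therefore $\det DF(\boldsymbol{a},\boldsymbol{b})=\det J_{f,y}(\boldsymbol{a},\boldsymbol{b})\neq 0$, so $DF(\boldsymbol{a},\boldsymbol{b})$ is invertible.

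Second, I would apply the Inverse Function Theorem to $F$ at $(\boldsymbol{a},\boldsymbol{b})$: there exist open neighborhoods $V$ of $(\boldsymbol{a},\boldsymbol{b})$ and $W$ of $F(\boldsymbol{a},\boldsymbol{b})=(\boldsymbol{a},\boldsymbol{0})$ and a $C^{1}$ inverse $F^{-1}:W\rightarrow V$. Because $F$ leaves the $\boldsymbol{x}$-coordinate untouched, its inverse must have the form $F^{-1}(\boldsymbol{x},\boldsymbol{z})=(\boldsymbol{x},h(\boldsymbol{x},\boldsymbol{z}))$ for some $C^{1}$ map $h$. I would then take $U\subset\mathbb{R}^{n}$ to be the open set $\{\boldsymbol{x}:(\boldsymbol{x},\boldsymbol{0})\in W\}$ and define $g(\boldsymbol{x})=h(\boldsymbol{x},\boldsymbol{0})$. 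The identity $F(\boldsymbol{x},g(\boldsymbol{x}))=(\boldsymbol{x},\boldsymbol{0})$ then immediately yields $f(\boldsymbol{x},g(\boldsymbol{x}))=\boldsymbol{0}$ on $U$, while $g(\boldsymbol{a})=h(\boldsymbol{a},\boldsymbol{0})=\boldsymbol{b}$ follows from $F^{-1}(\boldsymbol{a},\boldsymbol{0})=(\boldsymbol{a},\boldsymbol{b})$.

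Third, for local uniqueness I would note that $F^{-1}$ is a homeomorphism on $V$, so any $\boldsymbol{y}$ with $(\boldsymbol{x},\boldsymbol{y})\in V$ and $f(\boldsymbol{x},\boldsymbol{y})=\boldsymbol{0}$ must coincide with $h(\boldsymbol{x},\boldsymbol{0})=g(\boldsymbol{x})$. Continuous differentiability of $g$ is inherited from $h$. An explicit formula for the derivative, namely $Dg(\boldsymbol{x})=-[J_{f,y}(\boldsymbol{x},g(\boldsymbol{x}))]^{-1}J_{f,x}(\boldsymbol{x},g(\boldsymbol{x}))$, can then be recovered by implicit differentiation of $f(\boldsymbol{x},g(\boldsymbol{x}))=\boldsymbol{0}$.

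The main obstacle is really the Inverse Function Theorem itself, which supplies the nontrivial analytical content. In a fully self-contained presentation one would prove it via the Banach Fixed Point Theorem applied to the contraction $T_{\boldsymbol{x}}(\boldsymbol{y})=\boldsymbol{y}-[J_{f,y}(\boldsymbol{a},\boldsymbol{b})]^{-1}f(\boldsymbol{x},\boldsymbol{y})$, whose fixed points in $\boldsymbol{y}$ are precisely the solutions of $f(\boldsymbol{x},\boldsymbol{y})=\boldsymbol{0}$. The delicate quantitative step is then to choose a closed ball around $\boldsymbol{b}$ and a neighborhood of $\boldsymbol{a}$ small enough that $T_{\boldsymbol{x}}$ both maps the ball into itself and contracts with a rate uniform in $\boldsymbol{x}$; this requires mean-value estimates exploiting the continuity of $Df$ to control the deviation of $J_{f,y}$ from its value at $(\boldsymbol{a},\boldsymbol{b})$. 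Once this quantitative contraction is secured, everything else is bookkeeping.
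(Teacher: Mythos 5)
Your proposal is correct: the reduction to the Inverse Function Theorem via the augmented map \(F(\boldsymbol{x},\boldsymbol{y})=(\boldsymbol{x},f(\boldsymbol{x},\boldsymbol{y}))\), whose Jacobian is block triangular with determinant \(\det J_{f,y}(\boldsymbol{a},\boldsymbol{b})\neq 0\), is the standard argument, and each step (openness of \(U\), \(g(\boldsymbol{a})=\boldsymbol{b}\), local uniqueness, and the derivative formula) is handled properly. The paper itself gives no proof --- it only cites \cite[Theorem 2.3]{lang1993implicit} --- and your argument is essentially the one found in that reference, so there is nothing further to compare.
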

\begin{proof}
The proof of this theorem can be found in any standard book in analysis; for example, see \cite[Theorem 2.3]{lang1993implicit}.
\end{proof}
\begin{remark}
This theorem ensures the persistence of the limit cycle and the Poincaré map under small perturbations, a key step in establishing the robustness of the phase reduction framework.
\end{remark}

\section{The Geometry of Asymptotic Phase and Isochrons}\label{sec:geometry-asymphase}

\subsection{Dynamics Near a Periodic Orbit}

Consider a system of ordinary differential equations
\begin{equation}\label{eq:dyn_system}
\frac{dx}{dt}=f(x), \quad x \in U \subset \mathbb{R}^n,
\end{equation}
where \(f: U \rightarrow \mathbb{R}^{n}\) is continuously differentiable. By the Picard-Lindelöf theorem, for every initial condition \(x \in U\), there exists a unique continuously differentiable solution \(\varphi(t,x)\) satisfying
\[
\varphi(0,x)=x, \quad \frac{d\varphi(t,x)}{dt}=f(\varphi(t,x)).
\]

We can construct a \textbf{local flow} \cite{guckenheimer1983nonlinear}. Consider the set
\[
M:=\bigcup_{x\in U} I_x \times \{x\} \subset \mathbb{R} \times U.
\]
Then we can define the local flow \(\varphi: M \to U\) such that
\begin{enumerate}
    \item \(\varphi(0,x)=x\),
    \item For all \(x\in U\) and \(s\in I_x\), \(\varphi(t+s,x)=\varphi(t,\varphi(s,x))\) for all \(t\in I_x-s\).
\end{enumerate}
Condition (2) is called the \emph{group property} of the flow.

We assume that the system has an \emph{exponentially stable limit cycle} \(\gamma\) with period 1, i.e., \(\gamma(t+1)=\gamma(t)\) for all \(t\in\mathbb{R}\), and there exist constants \(K>0\), \(\lambda>0\) such that for each initial point \(x\) sufficiently close to \(\gamma\),
\[
d(\varphi(t,x),\gamma)\leq Ke^{-\lambda t},
\]
where \(d(x,\gamma)=\inf_{x_{0}\in\gamma}\|x-x_{0}\|\). The \textbf{basin of attraction} of \(\gamma\) is defined as
\[
\mathcal{B}(\gamma)=\{x\in U \mid \lim_{t\rightarrow\infty}d(\varphi(t,x),\gamma)=0\}.
\]

\begin{proposition}[\cite{guckenheimer1983nonlinear}]
The basin of attraction \(\mathcal{B}(\gamma)\) is an open set.
\end{proposition}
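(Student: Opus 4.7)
The plan is to show that every point of $\mathcal{B}(\gamma)$ has a full open neighborhood contained in $\mathcal{B}(\gamma)$. The strategy combines two ingredients already in hand: the exponential stability hypothesis, which supplies a uniform tubular ``trap'' around $\gamma$ into which all nearby orbits fall and never leave; and continuous dependence of the flow on initial conditions, which guarantees that if a single orbit lands in this trap at some finite time $T$, then a whole neighborhood of its starting point does so as well.

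Concretely, I would proceed as follows. First, unpack the phrase ``sufficiently close to $\gamma$'' in the statement of exponential stability to fix a radius $\delta>0$ such that the tubular neighborhood $W_\delta:=\{y\in U : d(y,\gamma)<\delta\}$ lies entirely in the basin and every orbit starting in $W_\delta$ satisfies the exponential decay $d(\varphi(t,y),\gamma)\le Ke^{-\lambda t}$. Second, given $x_0\in\mathcal{B}(\gamma)$, use the definition of the basin to pick $T>0$ with $d(\varphi(T,x_0),\gamma)<\delta/2$. Third, invoke continuous dependence of the flow at time $T$ (a finite-time statement that follows from Picard--Lindel\"of together with Gr\"onwall's inequality, Lemma~\ref{lem:Gronwall_lemma}) to obtain an open neighborhood $V$ of $x_0$ such that $\|\varphi(T,y)-\varphi(T,x_0)\|<\delta/2$ for all $y\in V$. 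The triangle inequality then places $\varphi(T,y)\in W_\delta$ for every $y\in V$. Fourth, apply the group property $\varphi(t,y)=\varphi(t-T,\varphi(T,y))$ for $t\ge T$ and the exponential estimate valid inside $W_\delta$ to conclude $d(\varphi(t,y),\gamma)\le Ke^{-\lambda(t-T)}\to 0$, so $V\subset\mathcal{B}(\gamma)$.

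The argument has no serious obstacle, but it is worth flagging where the work actually sits. The delicate point is not the final exponential decay nor the group property, which are essentially bookkeeping, but the availability of continuous dependence on a fixed, possibly large, time interval $[0,T]$. Picard--Lindel\"of gives this only on a small interval a priori, and extending it to $[0,T]$ requires iterating the Gr\"onwall-type estimate along a compact piece of the orbit $\{\varphi(t,x_0):t\in[0,T]\}$. Once this is acknowledged, the rest of the proof is a short chain of inequalities and the openness of $\mathcal{B}(\gamma)$ follows immediately.
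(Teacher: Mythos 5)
Your proposal is correct and follows essentially the same route as the paper's proof: flow $x_0$ forward for a finite time into a trapping neighborhood of $\gamma$ supplied by exponential stability, then use a Gronwall-based continuous-dependence estimate to carry a whole neighborhood of $x_0$ into that trap. If anything, your version is written more carefully than the paper's (the triangle-inequality step and the remark about extending continuous dependence to a fixed interval $[0,T]$ are glossed over there), but the underlying argument is the same.
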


\begin{proof}
For every point \(x_0 \in \mathcal{B}(\gamma)\), there exists \(\epsilon > 0\) such that the ball \(B_\epsilon(x_0)\) is contained in \(\mathcal{B}(\gamma)\). Since \(\gamma\) is exponentially stable, there exists an open set \(N\) around \(\gamma\) such that any \(x \in N\) approaches \(\gamma\) exponentially fast. The point \(x_0 \in \mathcal{B}(\gamma)\) enters \(N\) in finite time \(t_0\), i.e., \(\varphi(t_0,x_0) \in N\). Since \(N\) is open, there exists an open neighborhood \(V = B_{\epsilon_0}(\varphi(t_0,x_0))\) contained in \(N\).

Define \(\alpha(t) := |\varphi(t,x_0) - \varphi(t,y)|\) for arbitrary \(y \in \mathcal{B}(\gamma)\). Then
\[
\alpha(t) \leq |x_0 - y| + K\int_0^t \alpha(s) ds.
\]
By Gronwall's inequality, \(|\varphi(t,x_0) - \varphi(t,y)| < |x_0 - y|e^{Kt}\). Choosing \(\epsilon = \epsilon_0 e^{-t_0 K}\) ensures that \(y \in B_\epsilon(x_0)\), proving the openness of \(\mathcal{B}(\gamma)\).
\end{proof}

\subsection{Phase Parameterization and Asymptotic Phase}

We can introduce a phase parameterization on the limit cycle \(\gamma\) in terms of \(\theta\in [0,1]\), where \([0,1]/\sim\) is homeomorphic to \(\mathbb{S}^1\). For points \(y\in\mathcal{B}(\gamma)\), we say \(y\) is in \textbf{asymptotic phase} with \(x\in\gamma\) if
\[
\lim_{t\to\infty}\|\varphi(t,y)-\varphi(t,x)\|=0.
\]
This leads to the definition of the \textbf{asymptotic phase function} \(\Theta:\mathcal{B}(\gamma)\to\mathbb{S}^1\) such that
\[
\lim_{t\to\infty}\|\varphi(t,y)-\varphi(t+\Theta(y),x)\|=0, \quad x\in\gamma.
\]
The schematic demonstration of the asymptotic phase function in the plane ($n=2$) is depicted in Fig.\ref{fig:asymptotic_phase}(a).

\begin{figure}[!htb]
\centering
\includegraphics[width=0.9\textwidth]{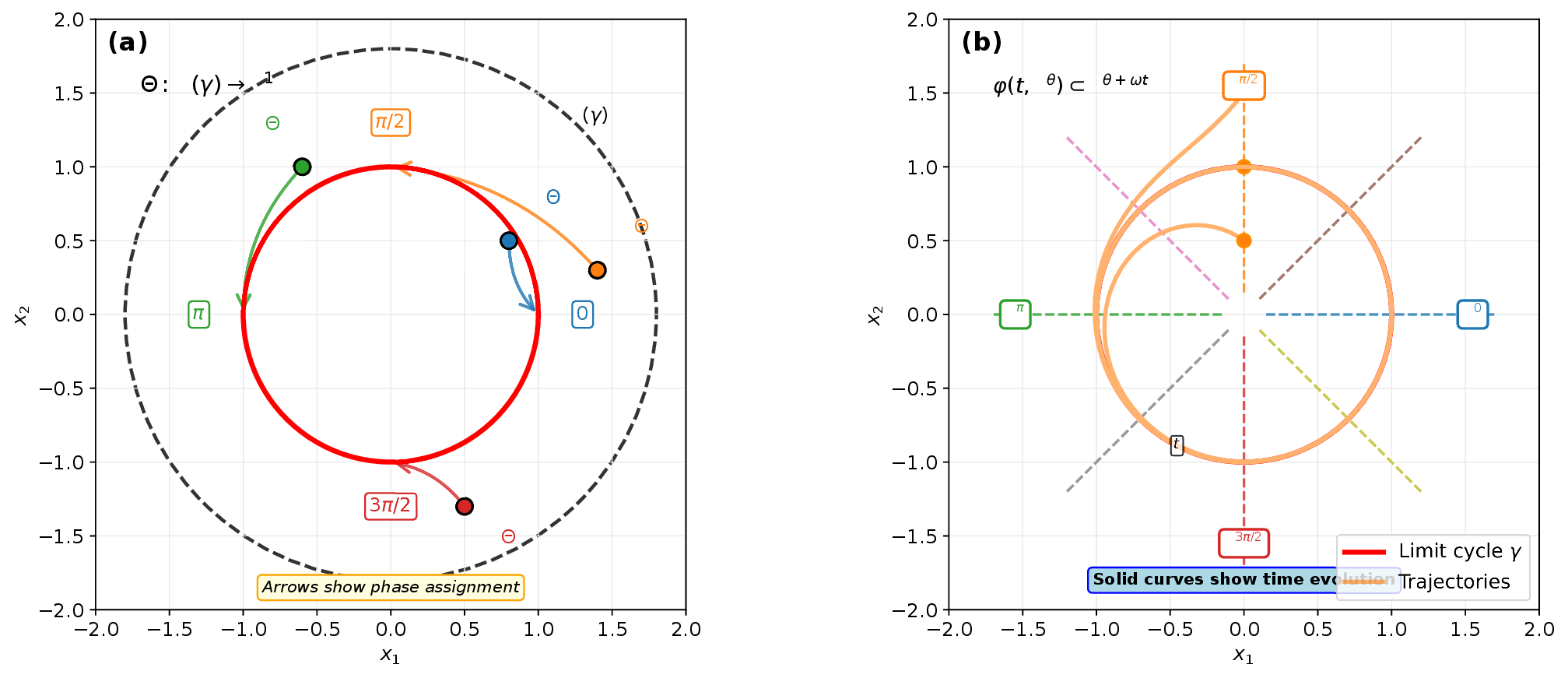}
\caption{\textbf{The asymptotic phase and isochrons for a stable limit cycle ($n=2$).} (a) Schematic illustration of the asymptotic phase function $\Theta:\mathcal{B}(\gamma)\to\mathbb{S}^1$. This function maps every point $x_0$ in the basin of attraction $\mathcal{B}(\gamma)$ to a phase $\theta\in\mathbb{S}^1$ on the limit cycle $\gamma$. This phase $\theta$ identifies the unique point on $\gamma$ that $x_0$ will synchronize with asymptotically. An isochron $\mathcal{I}^{\theta}(\gamma)$ (the blue surface) is the level set of the phase function, consisting of all points in the basin that share the same asymptotic phase $\theta$. The flow $\varphi(t,x)$ maps isochrons to isochrons, demonstrating the invariant foliation of the basin. That is, if two points start on the same isochron, they remain on the same isochron (and thus in the same phase relationship) for all time as they converge to $\gamma$. (b) A concrete example of radial isochrons for the system in \Cref{ex:simple_isochrons}. The limit cycle is the unit circle (red). The isochrons (dotted lines) are the radial lines of constant angle $\phi$, demonstrating that in this special case, the asymptotic phase is simply the angular coordinate.}
\label{fig:asymptotic_phase}
\end{figure}

\subsection{Isochrons and Invariant Foliation}

\begin{definition}[\cite{guckenheimer1975isochrons, winfree1967biological}]
The set of points \(x\) in \(\mathcal{B}(\gamma)\) with constant asymptotic phase \(\Theta(x)=\theta\) are called \textbf{isochrons}, denoted by \(\mathcal{I}^{\theta}\).
\end{definition}

Isochrons are level sets of the asymptotic phase function. The set of points where isochrons cannot be defined is called a \emph{phaseless set} \cite{guckenheimer1975isochrons}. See Figure \ref{fig:asymptotic_phase}(b) for a schematic illustration for $n=2$.

To characterize isochrons, we review the Poincaré map \cite{guckenheimer1983nonlinear}. A set \(\Sigma \subset \mathbb{R}^n\) is called a \emph{submanifold of codimension one} if it can be written as
\[
\Sigma = \{x \in U \mid s(x) = 0\},
\]
where \(U \subset \mathbb{R}^n\) is open, \(s \in C^k(U,\mathbb{R})\), \(k \geq 1\), and \(\nabla s(x) \neq 0\) for each \(x \in \Sigma\). The submanifold \(\Sigma\) is \emph{transversal} to \(f\) if \(\nabla s(x) \cdot f(x) \neq 0\) for each \(x \in \Sigma\).

Take a point \(x_0 \in \gamma\) and a transversal section \(\Sigma\) such that \(x_0 \in \Sigma\). The \textbf{Poincaré map} on \(\Sigma\) is defined as \(P: \Sigma \to \Sigma\) by
\[
P(x) := \varphi(\tau(x), x),
\]
where \(\tau: \Sigma \to \mathbb{R}\) is the \emph{time of first return}. If \(x_0 \in \gamma\), then \(\tau(x) \to 1\) as \(x \to x_0\), and \(x_0\) is a fixed point of \(P\).

Define the time-1 map \(g := \varphi(1, \cdot): \mathbb{R}^n \to \mathbb{R}^n\). By construction, \(g(\gamma) = \gamma\).

\begin{theorem}[Existence and Foliation of Isochrons]\label{thm:isochrons}
Consider the system \(\dot{x}=f(x)\) with \(f\in C^{k}(\mathbb{R}^{n})\), \(k\geq 1\). Assume \(\gamma\) is an exponentially stable limit cycle with basin of attraction \(\mathcal{B}(\gamma)\). For \(\epsilon\) small enough, through each \(x_{0}\in\gamma\), there is a unique isochron
\[
\mathcal{I}_{loc}(x_{0}):=\{z\in B_{\epsilon}(x_{0}) : |g^{m}(z)-x_{0}|<Ce^{-\lambda m},\,\lambda>0\},
\]
which is a graph of a \(C^{k}\) function. Moreover:
\begin{itemize}
    \item The union of isochrons forms an invariant foliation of a neighborhood \(V\) of \(\gamma\):
    \[
    V=\bigcup_{x_{0}\in\gamma}\mathcal{I}_{loc}(x_{0}), \quad \mathcal{I}_{loc}(x_{0})\cap\mathcal{I}_{loc}(y_{0})=\emptyset \text{ for } y_{0}\neq x_{0}\in\gamma.
    \]
    \item The foliation is invariant: for any \(x_{0}\in\gamma\) and \(t>0\),
    \[
    \varphi(t,\mathcal{I}_{loc}(x_{0}))\subset\mathcal{I}_{loc}(\varphi(t,x_{0})).
    \]
\end{itemize}
\end{theorem}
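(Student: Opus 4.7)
The plan is to reduce the problem to a fixed point analysis for the time-1 map $g = \varphi(1,\cdot)$ and apply the Graph Transform to produce a strong stable manifold at each $x_0 \in \gamma$. Since $\gamma$ has period $1$, every $x_0 \in \gamma$ is a fixed point of $g$, and the spectrum of $Dg(x_0)$ consists of the Floquet multipliers of $\gamma$: one equals $1$ with eigenvector $f(x_0)$ tangent to $\gamma$, while the remaining $n-1$ multipliers have modulus at most $e^{-\lambda} < 1$ by exponential stability. This yields a continuous invariant splitting $T_{x_0}\mathbb{R}^n = T_{x_0}\gamma \oplus E^s_{x_0}$ varying smoothly with $x_0 \in \gamma$.

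Working in adapted local coordinates $(s,c) \in E^s_{x_0} \oplus T_{x_0}\gamma$ centered at $x_0$, I would write $g$ as a $C^k$ perturbation of its linearization on a box $B^s_\epsilon \times B^c_\epsilon$, and introduce the complete metric space $\mathcal{G}$ of Lipschitz maps $h: B^s_\epsilon \to B^c_\epsilon$ with $h(0) = 0$ and a uniform Lipschitz bound. The Graph Transform operator $\mathcal{T}: \mathcal{G} \to \mathcal{G}$ is defined so that the graph of $\mathcal{T}(h)$ is carried by $g$ into the graph of $h$ (cut off to the box). Using the strict spectral gap between $E^s_{x_0}$ and the neutral direction, together with the bounds provided by \Cref{lem:Gronwall_lemma}, I would verify that $\mathcal{T}$ is a uniform contraction in the supremum norm; its unique fixed point $h^*$ has graph equal to $\mathcal{I}_{loc}(x_0)$. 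Higher regularity, giving a $C^k$ graph, is then obtained by the fiber-contraction theorem applied to the derivative cocycle in a suitable weighted norm. By construction, a point $z$ belongs to $\mathcal{I}_{loc}(x_0)$ precisely when $|g^m(z) - x_0| \leq C e^{-\lambda m}$, matching the stated characterization.

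For the foliation claim, disjointness is immediate from this characterization: if $z \in \mathcal{I}_{loc}(x_0) \cap \mathcal{I}_{loc}(y_0)$, then $g^m(z)$ would converge to both $x_0$ and $y_0$, forcing $x_0 = y_0$. The covering property follows from transversality (each $\mathcal{I}_{loc}(x_0)$ is tangent to $E^s_{x_0}$, which is complementary to $T_{x_0}\gamma$), continuous dependence of $\mathcal{I}_{loc}(x_0)$ on the base point $x_0$, and compactness of $\gamma$, so that the local isochrons fill a tubular neighborhood $V$. Flow invariance $\varphi(t, \mathcal{I}_{loc}(x_0)) \subset \mathcal{I}_{loc}(\varphi(t,x_0))$ is then a consequence of the group property: for $z \in \mathcal{I}_{loc}(x_0)$, the identity $g^m(\varphi(t,z)) = \varphi(t, g^m(z))$ shows that $\varphi(t,z)$ approaches $\varphi(t,x_0)$ exponentially, placing it in $\mathcal{I}_{loc}(\varphi(t,x_0))$.

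The main obstacle is the partial hyperbolicity of $x_0$ as a fixed point of $g$: because one Floquet multiplier equals $1$, the classical stable manifold theorem for hyperbolic fixed points does not apply directly, and the Graph Transform must be set up in a function space tailored to the spectral gap between the strictly stable directions and the neutral direction. Promoting the $C^0$ Lipschitz fixed point $h^*$ to $C^k$ smoothness requires a careful fiber contraction on the derivative equation, and ensuring that all contraction constants can be chosen uniformly in the base point $x_0$ as it ranges over the compact cycle $\gamma$ is the technical heart of the argument.
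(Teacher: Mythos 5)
Your proposal is correct and follows essentially the same route as the paper's proof: a Graph Transform / contraction-mapping argument for the time-1 map $g=\varphi(1,\cdot)$ in local coordinates adapted to the stable--center splitting at each $x_0\in\gamma$, with the isochron realized as the Lipschitz (then $C^k$) graph over the stable directions, and flow invariance deduced from the group property $g^m(\varphi(t,z))=\varphi(t,g^m(z))$. If anything, your version is more complete than the paper's sketch --- it works in general dimension $n$ rather than only $n=2$, makes explicit the Floquet-multiplier structure and the partial-hyperbolicity issue, supplies the fiber-contraction step for $C^k$ regularity, and actually argues the covering property via transversality and compactness of $\gamma$, which the paper asserts without proof.
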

The proof constructs the isochron as a graph over the stable directions by showing that the graph transform is a contraction mapping on a suitable space of Lipschitz functions.
\begin{proof}
The proof proceeds using the \textbf{Graph Transform} method \cite{hirsch2012differential}. In this context, we only proof it for the 2-dimensional phase space, but which can be extended to the whole space.  Let \(g=\varphi(1,\cdot)\) be the time-1 map. For each \(x_{0}\in\gamma\), introduce local coordinates \(z=(u,v)\) near \(x_{0}\) (see Figure \ref{fig:coordinate_transform}a) such that \(x_{0}\) corresponds to the origin. In these coordinates, the time-1 map takes the form:
\[
\bar{u}=Au+R(u,v), \quad \bar{v}=v+S(u,v),
\]
where \(A\in(-1,1)\), and \(R,S\) are \(C^{1}\) with \(R(0,0)=S(0,0)=0\), \(DR(0,0)=DS(0,0)=0\).

Define \(H(u,v):=v+S(u,v)\). For sufficiently small \(\epsilon>0\), we have $\text{Lip}(R)<\delta \ll 1$ and $\text{Lip}(H)<\mu \ll 1$. The local isochron is given by:
\[
\mathcal{I}_{loc}(x_{0})=\{(u,v)\in B_{\epsilon}(x_{0}) : |g^{m}(u,v)-x_{0}|<Ke^{-\lambda m}\}.
\]

We show that there exists a function \(\alpha:\mathbb{R}\to\mathbb{R}\) such that $\mathcal{I}_{loc}(x_{0})=\{(u,v): v=\alpha(u)\}=\text{graph}(\alpha)$, which is invariant under \(g\). Since $\text{Lip}(A^{-1})\text{Lip}(H)<1$ and $\text{Lip}(R)<\delta$, all hypotheses of the Graph Transform theorem are satisfied. Thus, there exists a unique function \(\alpha_{x_{0}}\) with $\alpha_{x_{0}}(0)=0$ and $\text{Lip}(\alpha_{x_{0}})\leq\delta$ such that $\text{graph}(\alpha_{x_{0}})=g(\text{graph}(\alpha_{x_{0}}))$.

The invariance under the flow follows from the group property and the continuity of \(\varphi\). For any \(x \in \mathcal{I}_{loc}(x_0)\), we have \(g^m(x) \to x_0\) exponentially fast. Then
\[
g^m(\varphi(t,x)) = \varphi(m+t,x) = \varphi(t,\varphi(m,x)) = \varphi(t,g^m(x)).
\]
By continuity of flow \(\varphi(t,g^m(x)) \to \varphi(t,x_0) = y_0\). Thus \(\varphi(t,\mathcal{I}_{loc}(x_0)) = \mathcal{I}_{loc}(y_0)\).

Distinct base points yield disjoint isochrons, which complete the foliation structure.
\end{proof}

\begin{remark}
The Graph Transform method, which is based on the geometric framework developed in \cite{gebrezabher2023thesis}, provides a clear geometric understanding of isochrons as stable manifolds of points on the limit cycle, forming a continuous invariant foliation of the basin of attraction. It intuitively refines a guess for the isochron by mapping it forward under the flow, and the exponential stability of the system guarantees that this process converges to the true invariant manifold—the isochron—which is represented as the graph of a function \(\alpha(u)\) in the local coordinates.
\end{remark}

\begin{figure}[!htb]
\centering

\includegraphics[width=0.7\textwidth]{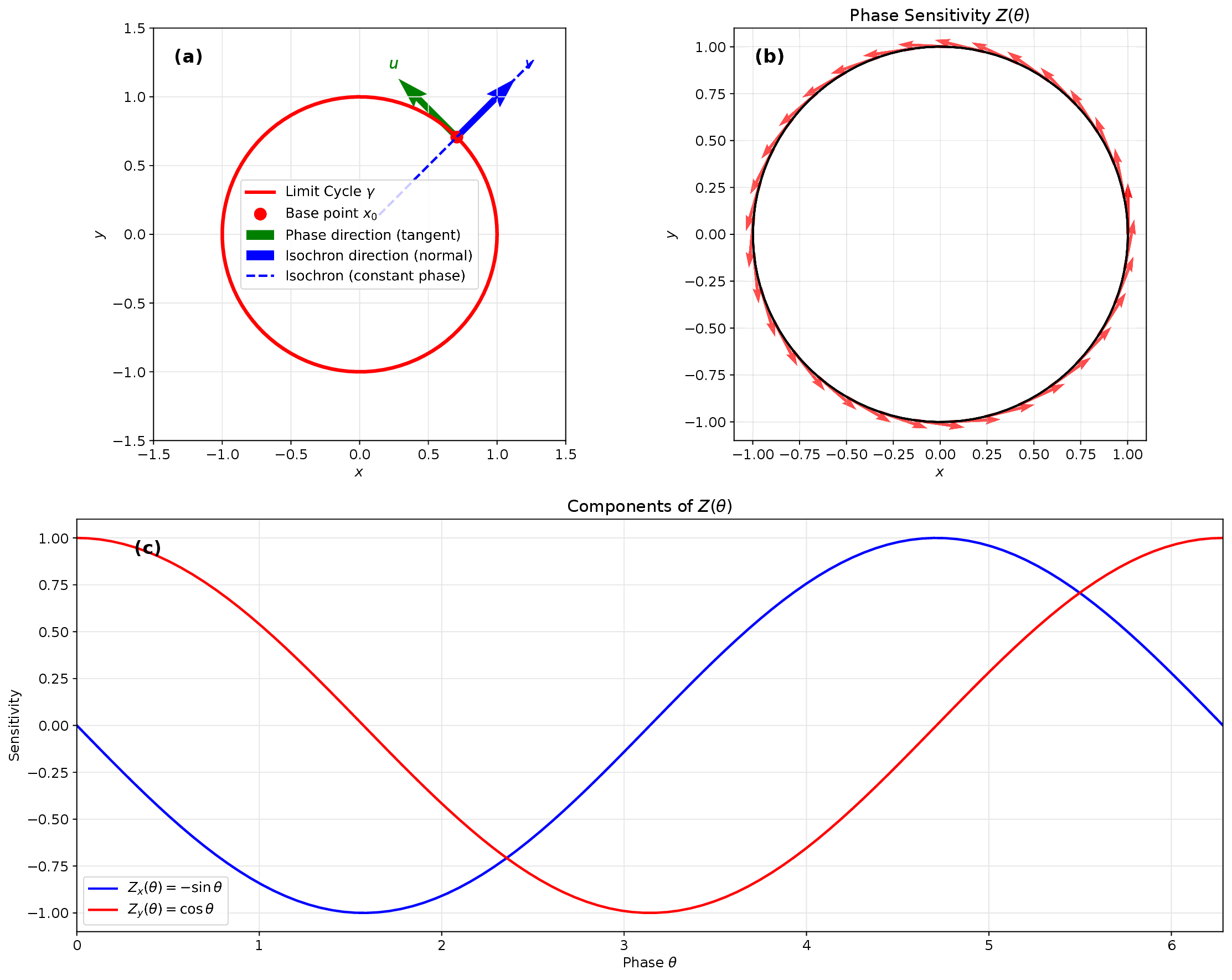}
\caption{\textbf{Local coordinate system and the phase sensitivity function ( $n=2$).} (a) A local coordinate frame 
\((u,v)\) is established near a base point $x_0$ on the limit cycle $\gamma$. The coordinate \(u\) corresponds to the phase direction (tangent to $\gamma$), while $v$ corresponds to the stable (amplitude) directions, transverse to $\gamma$. This coordinate system is essential for the constructive proof of isochron existence using the Graph Transform method (Theorem \ref{thm:isochrons}). (b) The phase sensitivity function (or iPRC) $Z(\theta)$ quantifies the infinitesimal phase shift caused by a perturbation. A perturbation $p$ can be decomposed into components tangential ($p_u$) and normal ($p_v$) to the limit cycle. The phase shift is given by the dot product \(Z(\theta)\cdot p\). The most effective perturbation for causing a phase shift is tangential to the isochron (and perpendicular to $\gamma$), as it does not displace the trajectory from its original asymptotic phase. (c) The components of the phase sensitivity function \(Z(\theta) = (Z_x(\theta), Z_y(\theta))\) are plotted against the phase $\theta$, showing how the oscillator's sensitivity to perturbations varies over its cycle.}
\label{fig:coordinate_transform}
\end{figure}

\begin{proposition}[Phase Dynamics \cite{winfree1967biological, kuramoto1984chemical}]
Let \(\varphi(t,x)\) be the solution of the system passing through \(x\) in \(\mathcal{B}(\gamma)\). Then the dynamics on \(\mathcal{B}(\gamma)\) of \(\gamma\) can be described by an asymptotic phase \(\theta(t):=\Theta(\varphi(t,x))\) in such a way that
\[
\frac{d\theta(t)}{dt}=1.
\]
\end{proposition}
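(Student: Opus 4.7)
The plan is to show that $\Theta(\varphi(t,x)) = t + \Theta(x) \pmod 1$ by appealing directly to the defining property of the asymptotic phase together with the group property of the flow, and then differentiate. This reduces the proposition to a purely algebraic statement about how $\Theta$ transforms under $\varphi$, after which $d\theta/dt = 1$ follows immediately.

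First I would fix a reference point $x_\ast \in \gamma$ and recall that, by the definition of the asymptotic phase given in Section~\ref{sec:geometry-asymphase}, any $y \in \mathcal{B}(\gamma)$ satisfies
\[
\lim_{s\to\infty}\|\varphi(s,y)-\varphi(s+\Theta(y),x_\ast)\|=0.
\]
Applying this to $y=\varphi(t,x)$ yields
\[
\lim_{s\to\infty}\bigl\|\varphi\bigl(s,\varphi(t,x)\bigr)-\varphi\bigl(s+\Theta(\varphi(t,x)),x_\ast\bigr)\bigr\|=0,
\]
and by the group property $\varphi(s,\varphi(t,x))=\varphi(s+t,x)$, so the left factor becomes $\varphi(s+t,x)$.

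Next I would compare this with the defining relation applied directly to $x$ itself, namely $\lim_{s\to\infty}\|\varphi(s+t,x)-\varphi(s+t+\Theta(x),x_\ast)\|=0$, and use the triangle inequality to conclude
\[
\lim_{s\to\infty}\bigl\|\varphi\bigl(s+\Theta(\varphi(t,x)),x_\ast\bigr)-\varphi\bigl(s+t+\Theta(x),x_\ast\bigr)\bigr\|=0.
\]
Since $\varphi(\cdot,x_\ast)$ parameterises $\gamma$ injectively modulo the period $1$, the only way two such translates of a periodic orbit can converge is if their phase arguments agree modulo $1$; hence $\Theta(\varphi(t,x)) \equiv t+\Theta(x) \pmod 1$. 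Differentiating this identity in $t$ (which is legitimate because $\Theta$ is $C^1$ on $\mathcal{B}(\gamma)$ by the invariant foliation result of Theorem~\ref{thm:isochrons}, and $\varphi(t,x)$ is $C^1$ in $t$ by Picard–Lindelöf) gives $d\theta/dt = 1$.

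The main obstacle I anticipate is the step that extracts $\Theta(\varphi(t,x))=t+\Theta(x) \pmod 1$ from the vanishing difference of two shifted trajectories along $\gamma$: one must justify that $\varphi(\cdot,x_\ast):\mathbb{R}/\mathbb{Z}\to\gamma$ is a genuine parameterisation so that phase agreement on $\gamma$ is equivalent to equality of arguments modulo $1$. Once this injectivity is noted (and it follows from the minimality of the period and uniqueness of solutions), the rest is routine. An alternative, equally clean route that avoids limits altogether is to use the invariance of isochrons from Theorem~\ref{thm:isochrons}: the identity $\varphi(t,\mathcal{I}_{loc}(x_0))\subset\mathcal{I}_{loc}(\varphi(t,x_0))$ says precisely that applying the flow for time $t$ advances the asymptotic phase label by $t$, from which $d\theta/dt=1$ is immediate by differentiating at $t=0$ and then using the group property to extend to all $t$.
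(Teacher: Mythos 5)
Your proposal is correct, but it takes a genuinely different (and more self-contained) route than the paper. The paper's proof simply cites Theorem~\ref{thm:isochrons} to assert that \(\Theta(\varphi(t,x))\) ``increases uniformly in time,'' and then records the chain-rule identity \(\nabla\Theta\cdot f=1\); the substantive step --- why the flow advances the phase label at unit rate --- is left implicit in the appeal to the isochron foliation. You instead derive the explicit identity \(\Theta(\varphi(t,x))=t+\Theta(x)\pmod 1\) directly from the defining limit of the asymptotic phase, the group property of the flow, and the injectivity of the periodic parameterisation of \(\gamma\); the key lemma you correctly flag --- that two translates \(\varphi(s+a,x_\ast)\) and \(\varphi(s+b,x_\ast)\) of a period-\(1\) orbit can converge only if \(a\equiv b\pmod 1\) --- follows from the fact that \(s\mapsto\|\gamma(s+a)-\gamma(s+b)\|\) is periodic, hence vanishes identically if it tends to zero. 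This buys you two things: the argument needs neither the smoothness of \(\Theta\) (once the affine identity in \(t\) is in hand, differentiation is trivial, so your invocation of \(C^1\)-regularity is actually superfluous) nor the full foliation machinery, only the existence and uniqueness of the asymptotic phase. Your closing ``alternative route'' via the invariance \(\varphi(t,\mathcal{I}_{loc}(x_0))\subset\mathcal{I}_{loc}(\varphi(t,x_0))\) is essentially the paper's own argument, so you have in effect supplied both the paper's proof and a more elementary one that makes the glossed-over step rigorous.
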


\begin{proof}
From Theorem \ref{thm:isochrons}, the asymptotic phase \(\Theta(\varphi(t,x))\) increases uniformly in time \(t\) such that
\[
\frac{d}{dt}\Theta(\varphi(t,x))=1.
\]
By the chain rule, we obtain the phase equation
\[
\frac{d\theta(t)}{dt}=\nabla\Theta(\varphi(t,x))\cdot f(\varphi(t,x))=1
\]
for all points \(x\) in the basin of attraction \(\mathcal{B}(\gamma)\) of \(\gamma\).
\end{proof}

\subsection{Phase Sensitivity Function}

The vector function
\[
Z(\theta):=\nabla\Theta(x)|_{x=\gamma(\theta)}
\]
is called the \textbf{phase sensitivity function} or \textbf{infinitesimal phase response curve} \cite{winfree1967biological, ermentrout1996type, schultheiss2011phase}. It measures how sensitively the oscillator responds to external perturbations and plays a crucial role in phase reduction.

\subsection{Examples of Isochrons}

\begin{example}[Radial Isochrons]\label{ex:simple_isochrons}
Consider the dynamical system \cite{hoppensteadt1997weakly}
\[
\dot{x} = x - y - x(x^{2}+y^{2}), \quad \dot{y} = x + y - y(x^{2}+y^{2}).
\]
In polar coordinates with \(x=r\cos\phi\) and \(y=r\sin\phi\), we obtain
\[
\dot{r}=r(1-r^{2}); \quad \dot{\phi}=1.
\]
This system has an attracting limit cycle \(\gamma\) with radius \(r=1\), and basin of attraction \(\mathcal{B}(\gamma)=\mathbb{R}^{2}\setminus\{(0,0)\}\). The isochrons are lines
\[
\mathcal{I}_{\text{loc}}(x_0):=\{\phi=\theta\}.
\]
A few of these isochrons are shown in Figure \ref{fig:asymptotic_phase}(b).

\noindent\textbf{Gradient of the asymptotic phase.} The asymptotic phase is \(\Theta(r,\phi)=\phi=\theta\). The gradient is
\[
\nabla\Theta = \left(\frac{\partial\Theta}{\partial x}, \frac{\partial\Theta}{\partial y}\right).
\]
Using \(r=\sqrt{x^2+y^2}\) and \(\phi=\arctan(y/x)\), we have
\[
\frac{\partial\Theta}{\partial x}=-\frac{1}{r}\sin\theta, \quad \frac{\partial\Theta}{\partial y}=\frac{1}{r}\cos\theta.
\]
Along the limit cycle \(\gamma\), the gradient is
\[
Z(\theta):=\nabla\Theta=(-\sin\theta,\cos\theta).
\]
This function is plotted schematically in Fig. \ref{fig:coordinate_transform}(c).
\end{example}

\begin{example}[Spiral Isochrons]\label{ex:spiral_isochrons}
Consider the system \cite{guckenheimer1975isochrons}
\[
\dot{x}=x-(x+y)(x^{2}+y^{2}), \quad \dot{y}=y+(x-y)(x^{2}+y^{2}).
\]
In polar coordinates, the system becomes
\[
\dot{r}=r(1-r^{2}), \quad \dot{\phi}=r^{2}.
\]
There is an attracting periodic orbit \(\gamma\) with radius \(r=1\). The phase \(\phi\) along the orbit satisfies \(\dot{\phi}=1\). We use the ansatz
\[
\theta(t):=\Theta(r,\phi)=\phi-\eta(r),
\]
where \(\eta(r)\) is an unknown function to be determined.

Differentiating, we obtain
\[
\dot{\Theta}=\dot{\phi}-\frac{d\eta}{dr}\frac{dr}{dt}.
\]
For points \(x\in\mathcal{B}(\gamma)\), \(\dot{\theta}=1\). From the system equations,
\[
\frac{d\eta}{dr}=-\frac{1}{r}.
\]
So \(\eta(r)=-\log(r)+C\), where \(C\) is constant. Choosing \(C=0\), the isochrons are the set of points \((r,\phi)\) such that
\[
\Theta(r,\phi)=\phi+\log(r)=\theta.
\]
These isochrons are numerically verified and are shown in Figure \ref{fig:spiral_isochrons}.\\

\noindent\textbf{Gradient of the asymptotic phase.} The asymptotic phase is \(\Theta(r,\phi)=\phi+\log(r)\). Thus
\[
\frac{\partial\Theta}{\partial x} = \frac{\partial\Theta}{\partial r}\frac{\partial r}{\partial x}+\frac{\partial\Theta}{\partial\phi}\frac{\partial\phi}{\partial x} = \frac{1}{r}(\cos\phi-\sin\phi),
\]
\[
\frac{\partial\Theta}{\partial y} = \frac{\partial\Theta}{\partial r}\frac{\partial r}{\partial y}+\frac{\partial\Theta}{\partial\phi}\frac{\partial\phi}{\partial y} = \frac{1}{r}(\cos\phi+\sin\phi).
\]
Along the limit cycle \(\gamma\),
\[
Z(\theta):=\nabla\Theta=(\cos\theta-\sin\theta,\cos\theta+\sin\theta).
\] 
\end{example}

\begin{figure}[!htb]
    \centering
    \includegraphics[width=0.8\textwidth]{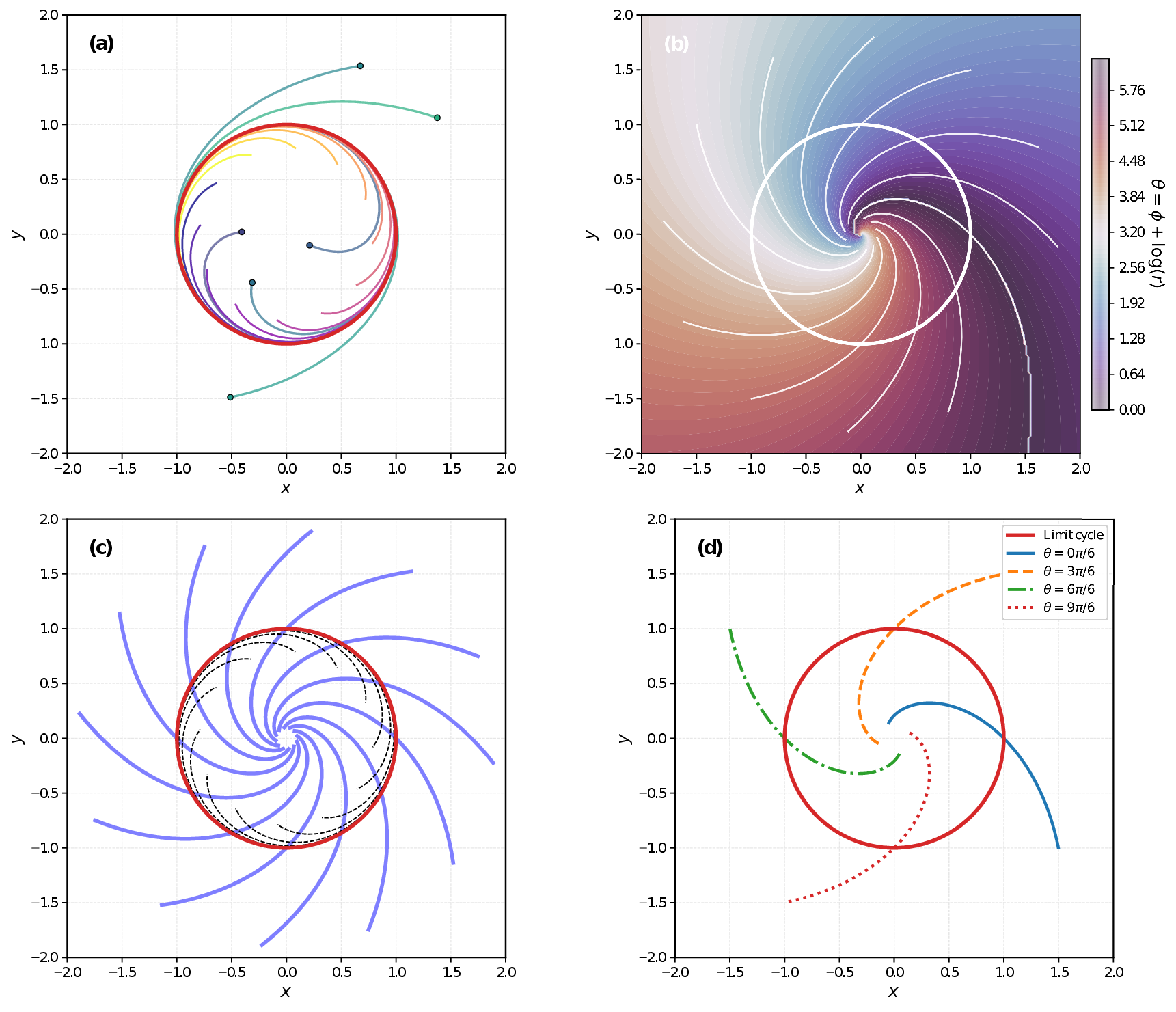}
    \caption{\textbf{Isochron analysis of the radial oscillator system from \Cref{ex:spiral_isochrons}.} The system dynamics are governed by $\dot{r} = r(1-r^2)$, $\dot{\phi} = r^2$ in polar coordinates, with a stable limit cycle at $r=1$. (a) Numerical isochrons (colored curves) computed via backward integration \citep{guckenheimer1975isochrons, Winfree2001}, with sample trajectories showing evolution toward the limit cycle. Circles indicate initial conditions. (b) Phase field $\theta(x,y) = \phi + \log(r)$ with analytical isochrons (white curves) as level sets. (c) Comparison showing perfect overlap between analytical isochrons (blue, $\phi + \log(r) = \text{constant}$) and numerical results (black dashed). (d) Selected isochrons with different asymptotic phases $\theta$, demonstrating the logarithmic spiral structure $\phi + \log(r) = \theta$.}
    \label{fig:spiral_isochrons}
\end{figure}

\section{First-Order Phase Reduction for Weakly Perturbed Systems}\label{sec:phase-reduction}

\subsection{Persistence of Periodic Orbits under Perturbation}

We consider the system being perturbed as
\begin{equation}\label{eq:perturbed_system}
\frac{dx}{dt}=f(x)+\epsilon p(x,t), \quad x\in\mathbb{R}^{n},
\end{equation}
where \(p(x,t+T)=p(x,t)\) for all \(t\), and \(\epsilon\) is a small parameter, where $T$ is the period of the perturbation.

When \(p\) is only a function of \(x\), i.e., \(p=p(x)\), the periodic orbit \(\gamma\) of \(f\) persists by the implicit function theorem on the Poincaré map \cite{guckenheimer1983nonlinear}.

\begin{proposition}[Persistence of Periodic Orbits \cite{guckenheimer1983nonlinear}]
Let \(\Sigma\) be a local transversal section of \(\gamma\) and \(P:\Sigma\to\Sigma\) the corresponding Poincaré map. By construction, \(P(x_0)=x_0\) for all \(x_0\in\gamma\) and the Jacobian \(D_x P(x_0)\) has no eigenvalue 1. For small \(\epsilon\), a local Poincaré map \(P_\epsilon:\Sigma\to\Sigma\) is well defined. Its fixed points \(x_\epsilon\) correspond to periodic orbits \(\gamma_\epsilon\).
\end{proposition}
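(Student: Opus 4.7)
The plan is to apply the Implicit Function Theorem twice: once to establish that the perturbed Poincar\'e map $P_\epsilon$ is well defined and smooth on $\Sigma$ for all sufficiently small $\epsilon$, and again to obtain a unique fixed point of $P_\epsilon$ near $x_0 := \gamma \cap \Sigma$, whose orbit under the perturbed flow is the desired persistent periodic orbit $\gamma_\epsilon$.

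For the first application, I would begin by invoking standard smooth dependence of solutions on parameters, so that the perturbed flow $\varphi_\epsilon(t,x)$ is $C^k$ jointly in $(t,x,\epsilon)$ on compact sets. Writing $\Sigma = \{x : s(x) = 0\}$ with $\nabla s(x_0) \cdot f(x_0) \neq 0$ by transversality of the unperturbed cycle, I define
\[
F(x,\tau,\epsilon) := s\bigl(\varphi_\epsilon(\tau,x)\bigr).
\]
At the reference triple $(x_0, 1, 0)$ we have $F = 0$ and $\partial_\tau F = \nabla s(x_0) \cdot f(x_0) \neq 0$. The IFT then yields a smooth first-return time $\tau_\epsilon(x)$ with $\tau_0(x_0) = 1$, and I set $P_\epsilon(x) := \varphi_\epsilon(\tau_\epsilon(x), x)$, which inherits $C^k$ regularity on a neighborhood of $x_0$ in $\Sigma$.

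For the second application, I would consider $G(x,\epsilon) := P_\epsilon(x) - x$ on $\Sigma$, noting $G(x_0, 0) = 0$. Its $x$-derivative at $(x_0,0)$ is $D_x P(x_0) - I$, regarded as an endomorphism of $T_{x_0}\Sigma \cong \mathbb{R}^{n-1}$. Exponential stability of $\gamma$ forces the nontrivial Floquet multipliers, i.e.\ the eigenvalues of $D_x P(x_0)$, into the open unit disk, so $1$ is not among them and $D_x P(x_0) - I$ is invertible. The IFT therefore produces a unique smooth branch $x_\epsilon = x_0 + O(\epsilon)$ with $P_\epsilon(x_\epsilon) = x_\epsilon$. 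Setting $\gamma_\epsilon(t) := \varphi_\epsilon(t, x_\epsilon)$ gives a closed orbit of period $\tau_\epsilon(x_\epsilon) \to 1$, and continuity of the spectrum of $D_x P_\epsilon(x_\epsilon)$ in $\epsilon$ preserves its hyperbolicity and stability type.

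The principal subtlety lies in the first IFT step: transversality of $\Sigma$ to the \emph{unperturbed} vector field must yield a smooth first-return time for the \emph{perturbed} flow, which is why one works on a neighborhood of $(x_0, 0)$ and simultaneously shrinks both the spatial neighborhood in $\Sigma$ and the admissible range of $\epsilon$. The second IFT step is then essentially algebraic, hinging only on the spectral gap at $1$ supplied by normal hyperbolicity of $\gamma$.
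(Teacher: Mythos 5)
Your proposal is correct and follows essentially the same route as the paper, which likewise invokes the Implicit Function Theorem to produce a local arc \(\epsilon\mapsto x(\epsilon)\) of fixed points of \(P_\epsilon\) using the invertibility of \(D_xP(x_0)-I\). You additionally supply the first IFT step establishing the smooth first-return time and hence the well-definedness of \(P_\epsilon\), a detail the paper asserts without proof, but the underlying argument is the same.
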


\begin{proof}
By the implicit function theorem, there exists a local arc \(\epsilon\mapsto x(\epsilon)\) with \(x(0)=x_0\) such that \(P_\epsilon(x(\epsilon))=x(\epsilon)\), showing the persistence of the periodic orbit \cite{guckenheimer1983nonlinear}.
\end{proof}

\subsection{Phase Reduction Theorem}

Under a weak perturbation \(\epsilon p(x,t)\), any solution of the perturbed system \eqref{eq:perturbed_system} that starts in the neighborhood of an exponentially stable limit cycle \(\gamma\) stays in its neighborhood. Let us introduce a phase variable \(\vartheta=\Omega t\) to transform the system \eqref{eq:perturbed_system} into an autonomous system on the extended phase space \(\mathbb{R}^n\times\mathbb{S}^1\) \cite{kuramoto1984chemical} as
\begin{equation}\label{eq:extended_system}
\frac{dx}{dt}=f(x)+\epsilon p(x,\vartheta), \quad \frac{d\vartheta}{dt}=\Omega
\end{equation}
with \(p(x,\vartheta+2\pi)=p(x,\vartheta)\).

A transversal cross-section can be defined as
\[
\Sigma=\{(x,\vartheta)\in\mathbb{R}^{n}\times\mathbb{S}^{1} : \vartheta=\vartheta_{0}\}.
\]

\noindent\textbf{Underlying assumptions}:
\begin{enumerate}
\item[(A1)] \(f\) and \(p\) are continuously differentiable
\item[(A2)] the unperturbed system has an exponentially stable limit cycle \(\gamma\subset\mathbb{R}^{n}\) with period 1
\end{enumerate}

\begin{theorem}[Perturbed System Invariance]\label{thm:perturbed_invariance}
Consider the extended system \eqref{eq:extended_system} satisfying assumptions (A1) and (A2). Then \(\exists\epsilon_0>0\) such that \(\forall\epsilon<\epsilon_0\), there is a neighborhood \(W\subset\mathbb{R}^{n}\) of \(\gamma\) that is positively invariant:
\[
g_{\epsilon}(W)\subset W,
\]
where \(g_{\epsilon}\) is the time-one map of the perturbed system.
\end{theorem}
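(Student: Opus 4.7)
The plan is to exploit two ingredients: the strong contraction already present in the unperturbed time-one map \(g_0\) near \(\gamma\), and a Gronwall-type estimate showing that \(g_\epsilon\) is \(O(\epsilon)\)-close to \(g_0\) on a fixed tubular neighborhood. Combining them, a neighborhood that is mapped strictly inside itself by \(g_0\) will continue to be mapped into itself by \(g_\epsilon\) whenever \(\epsilon\) is small enough. Throughout, I interpret \(g_\epsilon\) as the map \((x,\vartheta_0)\mapsto\varphi_\epsilon(1;x,\vartheta_0)\) obtained from the extended autonomous system \eqref{eq:extended_system}, projected back to \(\mathbb{R}^n\); the estimates below will be uniform in \(\vartheta_0\in\mathbb{S}^1\), so the resulting \(W\) depends only on \(x\).

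First I would fix the tubular structure. By assumption (A2) and the exponential stability of \(\gamma\), there exist constants \(K,\lambda>0\) and \(\delta_0>0\) such that the \(\delta\)-tube \(N_\delta:=\{x\in\mathbb{R}^n:d(x,\gamma)<\delta\}\) satisfies \(d(\varphi(t,x),\gamma)\leq Ke^{-\lambda t}d(x,\gamma)\) for every \(x\in N_{\delta_0}\) and \(t\geq 0\). Choose \(\delta\in(0,\delta_0)\) small enough that \(q:=Ke^{-\lambda}<1\); then the unperturbed time-one map satisfies \(g_0(\overline{N_\delta})\subset N_{q\delta}\), which is a strict inward contraction by a positive margin \((1-q)\delta>0\).

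Next I would quantify how much the perturbation distorts this map. Writing \(u(t):=\|\varphi_\epsilon(t;x,\vartheta_0)-\varphi(t,x)\|\) for \(x\in\overline{N_\delta}\) and using the mean value theorem on \(f\), one obtains
\begin{equation*}
u(t)\leq \epsilon\int_0^t \sup_{y\in\overline{N_{\delta_0}}}\|p(y,\vartheta_0+\Omega s)\|\,ds + L\int_0^t u(s)\,ds,
\end{equation*}
where \(L\) is a Lipschitz bound for \(f\) on a compact set containing \(\overline{N_{\delta_0}}\). Lemma \ref{lem:Gronwall_lemma} then yields \(u(1)\leq C\epsilon\) with \(C:=\|p\|_\infty e^L\) uniform in \((x,\vartheta_0)\). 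Therefore, for every \(x\in\overline{N_\delta}\),
\begin{equation*}
d(g_\epsilon(x),\gamma)\leq d(g_0(x),\gamma)+\|g_\epsilon(x)-g_0(x)\|\leq q\delta+C\epsilon.
\end{equation*}
Setting \(\epsilon_0:=(1-q)\delta/C\) forces \(g_\epsilon(\overline{N_\delta})\subset N_\delta\) for all \(\epsilon<\epsilon_0\), so \(W:=N_\delta\) is positively invariant.

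The main obstacle I anticipate is making the Gronwall estimate genuinely uniform in the driving phase \(\vartheta_0\) and in the starting point \(x\): one must ensure that perturbed trajectories do not escape \(N_{\delta_0}\) during the short time interval \([0,1]\) before the contraction of \(g_0\) has had the chance to act, so that the Lipschitz constant \(L\) and the supremum of \(\|p\|\) can be controlled on a fixed compact set independent of \(\epsilon\). Shrinking \(\delta\) a second time (keeping \(q<1\)) and using continuous dependence on initial data for a finite interval handles this, but the bookkeeping requires care. As a cleaner alternative, one could bypass this by invoking Theorem \ref{thm:normal_hyperbolicity}: in \(\mathbb{R}^n\times\mathbb{S}^1\) the cylinder \(\gamma\times\mathbb{S}^1\) is normally hyperbolic and therefore persists as a nearby invariant manifold \(\gamma_\epsilon\) under the \(C^1\)-small perturbation \(\epsilon p\), and any tubular neighborhood of \(\gamma_\epsilon\) that is contained in \(N_\delta\) furnishes the required \(W\).
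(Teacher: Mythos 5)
Your proposal is correct and rests on the same core mechanism as the paper's proof: the unperturbed time-one map contracts toward \(\gamma\), the perturbed time-one map is uniformly \(O(\epsilon)\)-close to it, and a triangle inequality then shows that a neighborhood mapped strictly inside itself by \(g_0\) remains positively invariant under \(g_\epsilon\) for \(\epsilon\) small. The differences are in the implementation, and they work in your favor. First, the paper simply \emph{assumes} the closeness bound \(\sup_x\|g(x)-g_\epsilon(x,\vartheta)\|\le\epsilon\), whereas you derive it from the variation-of-constants formula and Lemma \ref{lem:Gronwall_lemma}, with an honest acknowledgment of the bootstrap needed to keep the perturbed trajectory inside the tube on \([0,1]\); this is the more complete argument. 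Second, the paper measures contraction toward a ``unique fixed point \(z\)'' of \(g\) and produces an invariant ball of radius \(3\epsilon/(1-k)\) about \(z\); since \(g=\varphi(1,\cdot)\) fixes \emph{every} point of the period-one cycle \(\gamma\), it cannot be a contraction with a unique fixed point on a neighborhood of \(\gamma\), so your formulation in terms of the distance \(d(\cdot,\gamma)\) and a fixed tubular neighborhood \(N_\delta\) is both cleaner and better matched to the statement, which asks for a neighborhood of \(\gamma\). Your closing remark that normal hyperbolicity (Theorem \ref{thm:normal_hyperbolicity}) gives the result directly is also a legitimate shortcut the paper does not take.

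One small slip to repair: you write ``choose \(\delta\) small enough that \(q:=Ke^{-\lambda}<1\)'', but \(q\) does not depend on \(\delta\). If \(Ke^{-\lambda}\ge 1\) the time-one map need not contract the distance to \(\gamma\); the standard fix is to run the same argument with the time-\(m\) map for \(m\) large enough that \(Ke^{-\lambda m}<1\) (controlling the finitely many intermediate iterates by continuity), or to pass to an adapted Lyapunov metric in which \(K=1\). The paper's proof has the identical gap when it asserts that \(g\) is ``by construction'' a contraction with constant \(k\in(0,1)\), so this does not distinguish the two arguments, but it should be stated correctly.
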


\begin{proof}
Assume \(\epsilon \ll 1\). The flow of the perturbed system is \(\varphi_\epsilon(t,x,\vartheta)=(\varphi_\epsilon(\vartheta,x,t,\epsilon),\vartheta(t))\). Define the time-one map as
\[
g_\epsilon:=\varphi_\epsilon(1,\cdot,\cdot):\mathbb{R}^{n}\times\mathbb{S}^{1}\longrightarrow\mathbb{R}^{n}\times\mathbb{S}^{1}, \quad (x,\vartheta)\longmapsto(g_\epsilon(x,\vartheta),\bar{\vartheta}).
\]
For \(\epsilon=0\), \(g_0(x,\vartheta)=g(x,\vartheta)\), where \(g:\mathbb{R}^{n}\to\mathbb{R}^{n}\) is the time-one map for the unperturbed system. By construction, \(g\) is a contraction with Lipschitz constant \(k\in(0,1)\) and unique fixed point \(z\), and \(g(\gamma)=\gamma\).

Let us introduce the composition $$g_{\epsilon}^{m}(\cdot,\bm{\vartheta})=g_\epsilon(\cdot,\vartheta_m)\circ g_\epsilon(\cdot,\vartheta_{m-1})\circ \dots g_\epsilon(\cdot,\vartheta_1)$$ of transformations, and assume that the transformations satisfy
\begin{align}
    \sup_{x\in \mathbb{R}^n} \|g(x) - g_{\epsilon}(x,\theta)\| \leq \epsilon.
\end{align}{}
Consider a fixed $\vartheta$ and let $G=g_{\epsilon}$, then note that
\[
\|G(x) - G(y)\| \le \|G(x) - g(x)\| + \|g(y) - G(y)\| + \|g(x) - g(y)\| \le  2 \varepsilon + k \| x - y\|
\]
and,
\[
\|G(x) - g(y)\| \le \|G(x) - G(y)\| + \| g(y) -  G(y) \| \le  3 \varepsilon + k \| x - y  \|.
\]
Consider a ball  $x \in W(z,\delta)$.  We claim that if $ \delta = \frac{3 \varepsilon}{1 - k }$ then 
$g^n_{\epsilon}(x,\vartheta) \in W(z,\delta)$. Indeed, consider the action of a generic element $G$
\[
\| G(x) - z \|  = \| G(x) -  g(z) \| \le 3 \varepsilon + k \| x - z \|  \le \delta
\]
but since $x \in W(z,\delta)$ we have the following bound $\delta \ge 3 \varepsilon/(1 - k )$, and by induction the claim follows. This shows that $g_\epsilon(x,\theta)$ is a contraction on $W$.
\end{proof}
\Cref{thm:perturbed_invariance} implies that the perturbed orbit, denoted by $\Tilde{\gamma}$ projected to $\mathbb{R}^n$ stays in a neighborhood of size $O(\epsilon)$ of the unperturbed orbit, $\gamma$. This observation leads us to obtain an approximated orbit equation under perturbation, demonstrated in \cref{fig:perturbedSoln}.

\begin{figure}[!ht]
    \centering
    \includegraphics[width=0.8\textwidth]{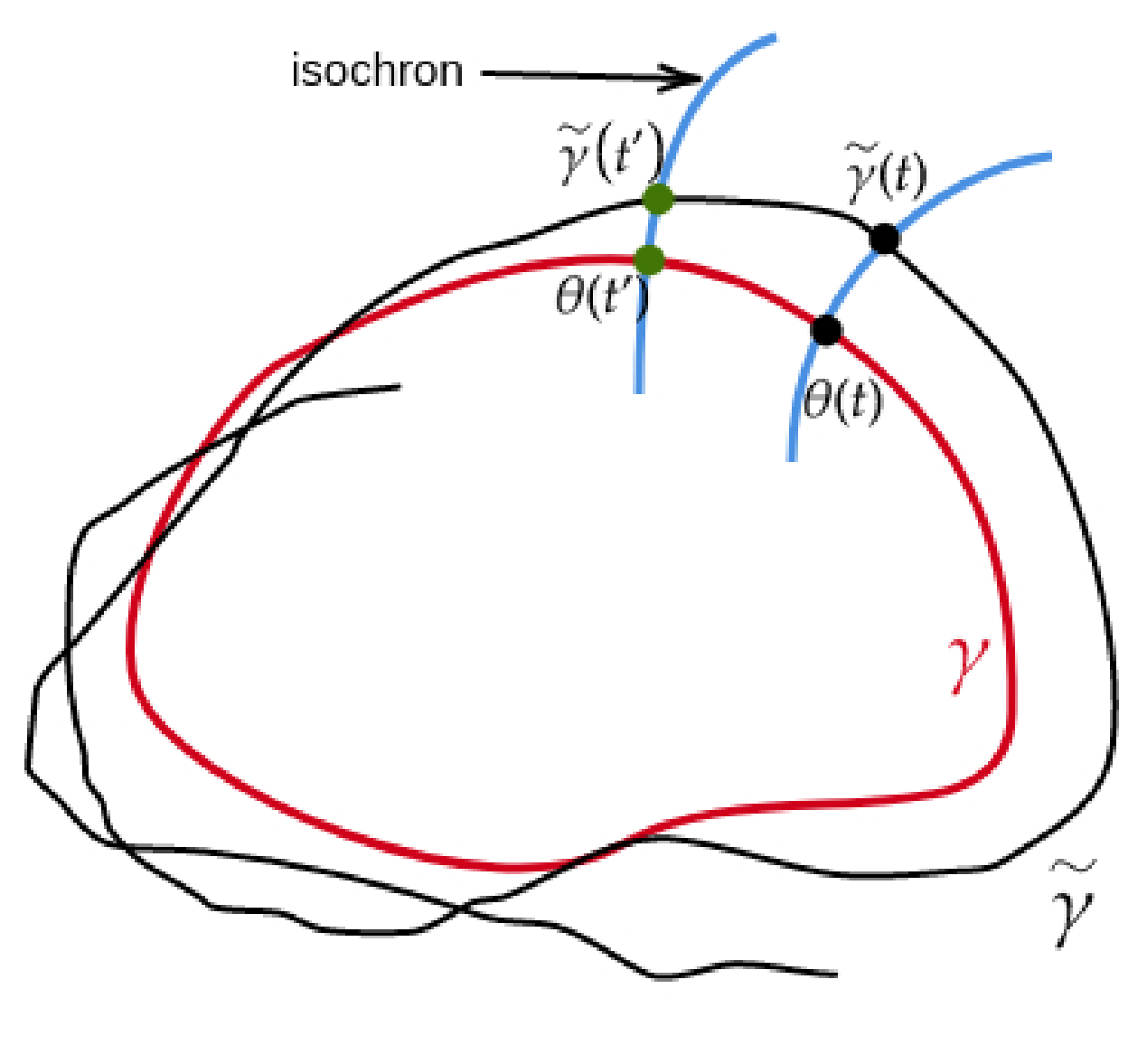}
    \caption{Schematic illustrating the core idea of phase reduction. A trajectory \(\tilde{\gamma}(t)\) of the perturbed system (black) remains in an \(\mathcal{O}(\epsilon)\) neighborhood of the unperturbed limit cycle \(\gamma\) (red). The phase reduction approximates its dynamics by the phase of the nearest point on \(\gamma\), effectively projecting the state onto the limit cycle.}
    \label{fig:perturbedSoln}
\end{figure}

\begin{theorem}[First-Order Phase Reduction \citep{kuramoto1984chemical, winfree1967biological}]\label{thm:phase_reduction}
Consider the system \eqref{eq:perturbed_system} with \(\tilde{\gamma},\gamma\) as defined above, and let \(\gamma(\theta)\) be a point on the limit cycle \(\gamma\) with phase \(\theta\in\mathbb{S}^1\). Assume \(\gamma(\theta)\), \(\theta\in\mathbb{S}^1\), and \(\epsilon\ll 1\). Then the dynamics is reduced to a phase model
\begin{equation}\label{eq:reduced_eqn}
\frac{d\theta}{dt}=1+\epsilon\Gamma(\theta,t)+\epsilon^2 R(\theta,t,\epsilon),
\end{equation}
where
\[
\Gamma(\theta,t):=\nabla\Theta|_{\gamma(\theta)}\cdot p(\gamma(\theta),t)
\]
and \(R\) is the remainder term.
\end{theorem}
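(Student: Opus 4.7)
My plan is to apply the chain rule to the composite function $\theta(t) := \Theta(\tilde{x}(t))$, where $\tilde{x}(t)$ is a solution of the perturbed system \eqref{eq:perturbed_system} and $\Theta$ is the unperturbed asymptotic phase; use the identity $\nabla\Theta \cdot f \equiv 1$ on $\mathcal{B}(\gamma)$ from the Phase Dynamics proposition to cancel the leading drift; and then Taylor-expand the remaining $O(\epsilon)$ term about $\gamma(\theta)$ to identify $\Gamma$ and absorb higher-order deviations into the remainder $R$.

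\textbf{Step 1: exact phase equation.} By Theorem \ref{thm:perturbed_invariance}, for $\epsilon<\epsilon_{0}$ the trajectory $\tilde{x}(t)$ remains in a positively invariant neighborhood $W\subset\mathcal{B}(\gamma)$ on which $\Theta$ is $C^{k}$, so $\theta(t)$ is well-defined and differentiable. Differentiating along the perturbed flow gives
$$\dot{\theta}(t) \;=\; \nabla\Theta(\tilde{x}(t))\cdot f(\tilde{x}(t))\;+\;\epsilon\,\nabla\Theta(\tilde{x}(t))\cdot p(\tilde{x}(t),t).$$
The first term equals $1$ by the Phase Dynamics proposition, producing the exact identity
$$\dot{\theta}(t) \;=\; 1 + \epsilon\,\nabla\Theta(\tilde{x}(t))\cdot p(\tilde{x}(t),t).$$

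\textbf{Step 2: projection and Taylor expansion.} Theorem \ref{thm:perturbed_invariance} together with the isochron foliation (Theorem \ref{thm:isochrons}) permits the ansatz $\tilde{x}(t)=\gamma(\theta(t))+\epsilon\,\xi(t,\epsilon)$ with $\xi$ uniformly bounded in $t$ and $\epsilon$. Since $p$ and $\nabla\Theta$ are $C^{1}$ and $\gamma$ is compact, a first-order Taylor expansion about $\gamma(\theta(t))$ yields
$$\nabla\Theta(\tilde{x}(t)) = Z(\theta(t)) + \epsilon\,A(\theta(t),\xi,\epsilon),\qquad p(\tilde{x}(t),t) = p(\gamma(\theta(t)),t) + \epsilon\,B(\theta(t),\xi,t,\epsilon),$$
where $A$ and $B$ are continuous and uniformly bounded on $\mathbb{S}^{1}\times\{\|\xi\|\leq M\}\times\mathbb{R}\times[0,\epsilon_{0}]$. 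Substituting into the exact equation from Step~1 and collecting terms gives the claimed form with $\Gamma(\theta,t)=Z(\theta)\cdot p(\gamma(\theta),t)$ and remainder
$$R(\theta,t,\epsilon) \;=\; Z(\theta)\cdot B + A\cdot p(\gamma(\theta),t) + \epsilon\,A\cdot B,$$
which is bounded uniformly in $(\theta,t)$ by the same compactness and smoothness assumptions.

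\textbf{Main obstacle.} The principal technical difficulty is upgrading the generic proximity statement $\mathrm{dist}(\tilde{x}(t),\gamma)=O(\epsilon)$ from Theorem \ref{thm:perturbed_invariance} to the stronger pointwise estimate $\|\tilde{x}(t)-\gamma(\theta(t))\|=O(\epsilon)$ with $\theta(t)$ the \emph{asymptotic} phase rather than the nearest-point phase. I would resolve this via the local tubular chart adapted to the isochron foliation: near each $\gamma(\theta_{0})$, the local isochron is the graph of a Lipschitz function $\alpha$ over the transverse directions with $\mathrm{Lip}(\alpha)\leq\delta\ll 1$ (Theorem \ref{thm:isochrons}). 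Consequently, if $\tilde{x}$ lies at transverse distance $O(\epsilon)$ from $\gamma$, its asymptotic-phase coordinate differs from its arc-length coordinate by $O(\epsilon)$, so $\gamma(\theta(t))$ and $\tilde{x}(t)$ are indeed $O(\epsilon)$ apart. Once this is established, the compactness of $\gamma$, the periodicity (or uniform boundedness) of $p$ in $t$, and the $C^{1}$ bounds on $\Theta$ and $p$ deliver the uniform bound on $R$ needed to complete the proof.
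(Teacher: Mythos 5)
Your proposal is correct and follows essentially the same route as the paper: apply the chain rule to $\Theta(\tilde{\gamma}(t))$, cancel the drift via $\nabla\Theta\cdot f\equiv 1$, and Taylor-expand $\nabla\Theta$ and $p$ about the limit cycle to isolate $\Gamma(\theta,t)=Z(\theta)\cdot p(\gamma(\theta),t)$ with an $O(\epsilon^{2})$ remainder. Your ``main obstacle'' paragraph is in fact more careful than the paper's proof, which expands about $\gamma(t)$ without distinguishing the asymptotic-phase base point $\gamma(\theta(t))$ from the nearest point on the cycle; your Lipschitz-graph argument via Theorem~\ref{thm:isochrons} closes that small gap cleanly.
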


\begin{proof}
By construction, \(\theta(t)=\Theta(\tilde{\gamma}(t))\) is a phase in \(\gamma\). Using the chain rule and the perturbed system \eqref{eq:perturbed_system}, we obtain
\begin{equation}\label{eq:reduced_nonauto_system}
\frac{d\theta(t)}{dt}=\nabla\Theta(\tilde{\gamma}(t))\cdot[f(\tilde{\gamma}(t))+\epsilon p(\tilde{\gamma}(t),t)]=1+\epsilon\nabla\Theta(\tilde{\gamma}(t))\cdot p(\tilde{\gamma}(t),t),
\end{equation}
where we used \(\nabla\Theta(x)\cdot f(x)=1\) in the basin of attraction of \(\gamma\). Notice that there is no approximation in equation~\eqref{eq:reduced_nonauto_system} as it depends explicitly on $x, t$, and $\theta$. We want to reduce it to an equation of $\theta$ only.

Since the isochrons $\mathcal{I}_{loc}(x)$ smoothly depend on the base point $x$, then the gradient $\nabla\Theta(x)$ is differentiable for each $x$ in a neighborhood of $\gamma$. We know that $\Tilde{\gamma}(t)$ is assumed to be $\epsilon$-close to the limit cycle $\gamma$, that is, $|\Tilde{\gamma}(t)-\gamma(t)| = \mathcal{O}(\epsilon)$. By the Taylor expansion of $\nabla\Theta$ about $\Tilde{\gamma}(t)-\gamma(t)$, we have
\[
\nabla\Theta(\tilde{\gamma}(t)) = \nabla\Theta(\gamma(t)+\Tilde{\gamma}(t)-\gamma(t))=\nabla\Theta(\gamma(t))+ \mathcal{O}(\epsilon). 
\]
From assumption (A1) we know that $p$ is differentiable, and hence by the Taylor expansion of $p$ about $\Tilde{\gamma}(t)-\gamma(t)$, we have
\[
p(\tilde{\gamma}(t),t) = p(\gamma(t)+\Tilde{\gamma}(t)-\gamma(t),t)=p(\gamma(t),t)+\mathcal{O}(\epsilon).
\]
Hence, the first-order approximation phase dynamics of system \eqref{eq:reduced_nonauto_system} along the orbit $\gamma$ can be written as
\begin{align}\label{eq:perturbed_reducedEq}
    \frac{d\theta(t)}{d t} = 1 + \epsilon\nabla\Theta(\gamma(t))\cdot p(\gamma(t),t) + \mathcal{O}(\epsilon^2)
\end{align}{}
which is the reduced phase equation of the perturbed system \eqref{eq:perturbed_system}. 
\end{proof}

\begin{remark}
The systematic derivation of first-order phase reduction presented here extends the treatment in \cite{gebrezabher2023thesis}, which also addresses practical considerations for experimental applications and network reconstruction from data.
\end{remark}

\begin{figure}[!ht]
\centering
\includegraphics[width=0.8\textwidth]{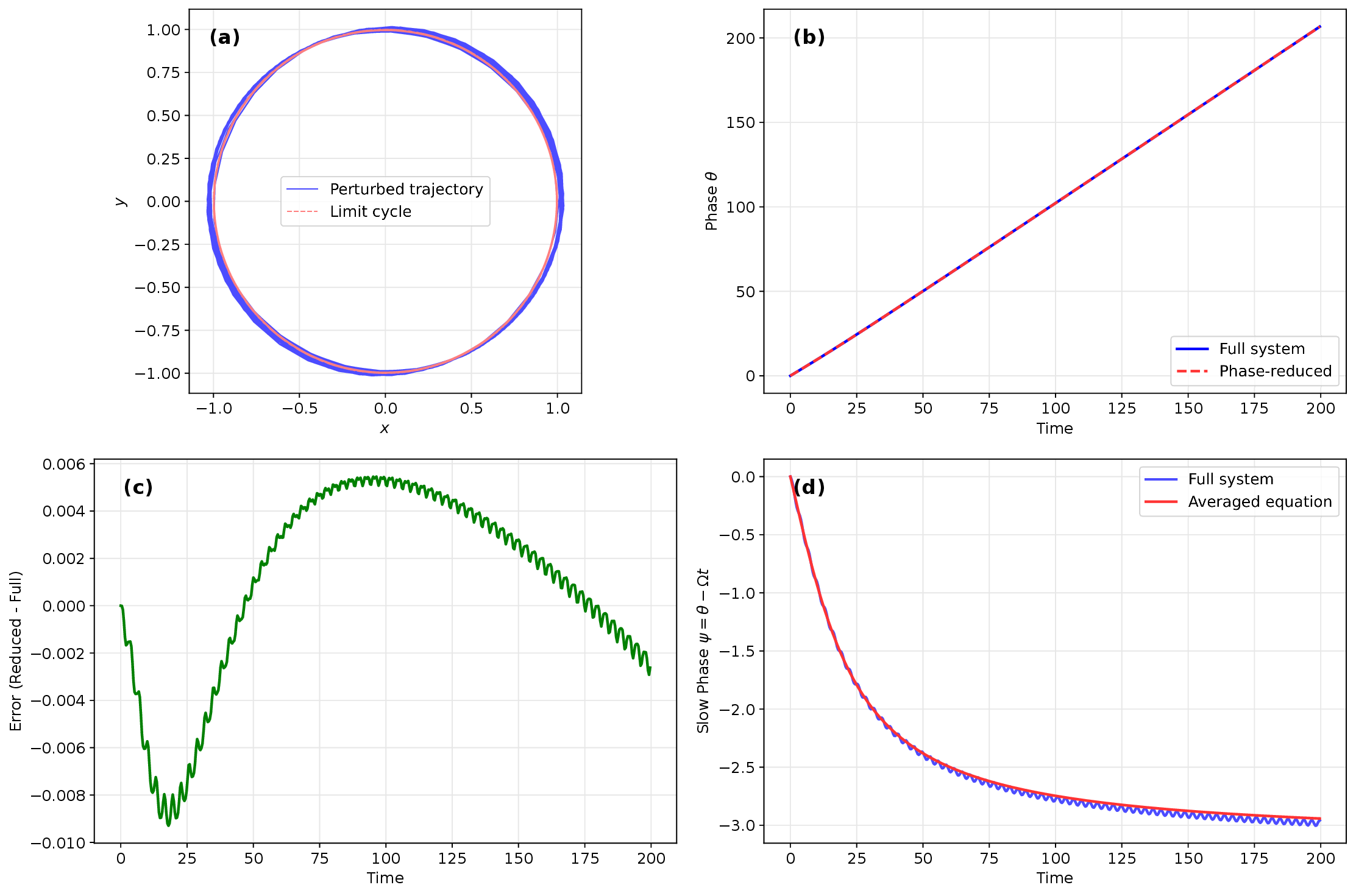}
\caption{\textbf{Numerical validation of the first-order phase reduction for a weakly perturbed oscillator.}
(a) Trajectory of the perturbed system (blue) remains within an $\mathcal{O}(\epsilon)$ neighborhood of the unperturbed limit cycle $\gamma$ (red). The phase reduction approximates the dynamics by projecting the state onto the limit cycle along the local isochron (dashed line). (b) Comparison of the phase evolution 
$\theta(t)$ between the full system (solid line) and the first-order reduced model (dashed line). The close agreement validates the reduction. (c) The absolute error \(|\theta_{full}(t)-\theta_{reduced}(t)|\) remains small, of order $\mathcal{O}(\epsilon)$, over the observed time scale, confirming the theoretical prediction. (d) Evolution of the slow phase variable 
$\psi(t)=\theta(t)-\Omega t$. Its slow dynamics, compared to the rapid oscillation of $\theta(t)$, demonstrate the validity of the averaging approximation, which separates the fast and slow time scales.}
\label{fig:perturbed_trajectory}
\end{figure}

\subsection{Effective Coupling on Slow Scales}

In the reduced phase equation, considering \(\vartheta:=\Omega t\) as the phase of the external influence with \(\Omega\approx 1\), we define the slow phase
\[
\psi=\theta-\Omega t.
\]
Differentiating with respect to \(t\), we obtain
\begin{align}\label{eq:reduced_slow_phase}
\frac{d\psi(t)}{dt}=(1-\Omega)+\epsilon \Gamma(\psi,t) + \mathcal{O}(\epsilon^2),
\end{align}
where \(\Gamma(\psi,t)=Z(\psi+\Omega t)\cdot p(\gamma(\psi+\Omega t),t)\).

Assuming \(\Omega\approx 1\), we have \(1-\Omega=\epsilon\Delta\) with \(\Delta=\mathcal{O}(1)\). Thus \(\dot{\psi}=\mathcal{O}(\epsilon)\), \(\dot{\vartheta}=O(1)\), and \(\psi(t)\) is much slower than \(p(t)\). We can approximate \(\Gamma(\psi,t)\) by its average \cite{sanders1985averaging}.

By the averaging \cref{thm:periodic-averaging}, we approximate the right-hand side by integrating over one period of the fast external forcing \(p\), assuming \(\psi(t)\) does not vary within its period \(T\) and obtain an averaged phase equation of the form
\[
\frac{d\psi(t)}{dt}=\epsilon\Delta+\epsilon\bar{\Gamma}(\psi),
\]
where the coupling function is
\[
\bar{\Gamma}(\psi)=\frac{1}{T}\int_{0}^{T}\Gamma(\psi,t)dt=\frac{1}{T}\int_{0}^{T}Z(\psi+\Omega t)\cdot u(\psi+\Omega t,t)dt.
\]
where we have used the definition \(u(\psi+\Omega t,t) = p(\gamma(\psi+\Omega t),t)\). This result is summarized in the following theorem.

\begin{theorem}[Averaged Phase Reduction \cite{sanders1985averaging, kuramoto1984chemical}]\label{thm:averaged_reduction}
Consider the reduced phase equation~\eqref{eq:reduced_slow_phase}, with \(\epsilon\ll 1\), and \(\psi=\theta-\Omega t\) is a slowly varying phase. On a time scale of \(t\sim 1/\epsilon\), the equation can be approximated by
\[
\frac{d\psi}{dt}=\epsilon\Delta+\epsilon\bar{\Gamma}(\psi),
\]
where
\[
\bar{\Gamma}(\psi)=\frac{1}{T}\int_{0}^{T}Z(\psi+\Omega t)\cdot u(\psi+\Omega t,t)dt.
\]
Moreover, the solutions are related by \(|\psi(t)-\phi(t)|=\mathcal{O}(\epsilon)\), where \(\phi(t)\) is the solution of the averaged equation.
\end{theorem}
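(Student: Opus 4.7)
The plan is to apply the classical periodic averaging theorem (\Cref{thm:periodic-averaging}) to the slow-phase equation \eqref{eq:reduced_slow_phase}, after substituting the near-resonance relation $1-\Omega = \epsilon\Delta$. This rewrites the equation in the standard averaging form $\dot\psi = \epsilon F(\psi,t,\epsilon)$ with $F(\psi,t,\epsilon) := \Delta + \Gamma(\psi,t) + \epsilon R(\psi,t,\epsilon)$, whose formal time average is precisely $\bar{F}(\phi) = \Delta + \bar{\Gamma}(\phi)$. The conclusion of the theorem will then be read off as the $\mathcal{O}(\epsilon)$ truncation of the transformed system.

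Next, I would verify the three hypotheses of \Cref{thm:periodic-averaging} on $F$. For smoothness, the isochron foliation theorem (\Cref{thm:isochrons}) yields $\Theta \in C^{k}$ on a neighborhood of $\gamma$, hence $Z = \nabla\Theta|_{\gamma}$ is $C^{k-1}$; combined with assumption (A1), this makes $F$ of class $C^{k-1}$ with $k-1 \geq 2$ as required. Boundedness on bounded $\psi$-sets follows from continuity of $Z$, $\gamma$, and $p$ on the compact tube around the limit cycle (whose existence is guaranteed by \Cref{thm:perturbed_invariance}). The periodicity in $t$ requires more care: expanding $\Gamma(\psi,t) = Z(\psi+\Omega t)\cdot p(\gamma(\psi+\Omega t),t)$, one sees that $Z$ contributes frequency $\Omega$ and $p$ contributes frequency $2\pi/T$. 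In the 1:1 entrainment regime implicit in the assumption $\Omega \approx 1$, these frequencies are commensurate and the product is $T$-periodic in $t$, as required.

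With the hypotheses established, \Cref{thm:periodic-averaging} supplies a $C^{k-1}$ near-identity change of variables $\psi = \phi + \epsilon w(\phi,t,\epsilon)$, periodic in $t$ with period $T$, under which the system becomes
$$\frac{d\phi}{dt} = \epsilon\bar{F}(\phi) + \epsilon^{2} F_{1}(\phi,t,\epsilon) = \epsilon\Delta + \epsilon\bar{\Gamma}(\phi) + \epsilon^{2} F_{1}(\phi,t,\epsilon).$$
Dropping the $\mathcal{O}(\epsilon^{2})$ remainder produces the averaged equation in the statement. The asymptotic estimate $|\psi(t) - \phi(t)| = \mathcal{O}(\epsilon)$ on the timescale $t \sim 1/\epsilon$ then follows directly from part (i) of the averaging theorem, applied to matched initial data $\psi(0) = \phi(0)$ (so that $|\psi(0) - \phi(0)| = \epsilon|w(\phi(0),0,\epsilon)| = \mathcal{O}(\epsilon)$).

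The main obstacle is the periodicity hypothesis. When $\Omega$ is incommensurate with $2\pi/T$, the function $\Gamma(\psi,\cdot)$ is only \emph{almost-periodic} in $t$, and \Cref{thm:periodic-averaging} does not apply. In that generic off-resonance situation one must instead invoke \Cref{lem:almost_periodic_avg}, replacing the $T$-average by the Bohr mean $M[\Gamma(\phi,\cdot)]$; the $\mathcal{O}(\epsilon)$ error bound on the $1/\epsilon$ timescale is unchanged. A secondary subtlety is that the near-identity transformation must not push $\psi$ outside the tubular neighborhood on which $\Theta$ and its isochrons are defined; this is secured by \Cref{thm:perturbed_invariance}, which provides the positively invariant neighborhood $W$ of $\gamma$ within which the entire construction — averaging transformation, isochrons, and phase sensitivity function alike — lives.
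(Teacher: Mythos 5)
Your proof follows essentially the same route as the paper, which derives this theorem in the text immediately preceding its statement by substituting \(1-\Omega=\epsilon\Delta\) and invoking the periodic averaging theorem (\Cref{thm:periodic-averaging}), exactly as you do, and reads off the error bound from part (i) of that theorem. Your version is in fact more careful than the paper's: the paper silently assumes the \(T\)-periodicity of \(\Gamma(\psi,\cdot)\) and does not check the smoothness or boundedness hypotheses, so your remarks on commensurability (and the fallback to \Cref{lem:almost_periodic_avg} in the incommensurate case) and on remaining inside the invariant neighborhood of \Cref{thm:perturbed_invariance} are correct refinements rather than deviations.
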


\subsection{Phase Locking}

\begin{proposition}[Phase Locking Condition for Sinusoidal Coupling \cite{winfree1967biological, kuramoto1984chemical}]
Consider an oscillator described by the phase equation in a rotating frame with frequency \(\Omega\):
\[
\frac{d\psi}{dt} = (1 - \Omega) - \epsilon F(\psi),
\]
where \(\psi = \theta - \Omega t\) is the phase difference, and \(\epsilon > 0\) is the coupling strength. Assume the coupling function is \(F(\psi) = \sin(\psi)\).

The system is in a \textbf{phase-locked} state if the phase difference \(\psi\) is constant, i.e., \(\frac{d\psi}{dt}=0\). This state corresponds to a fixed point \(\psi^*\) satisfying
\[
\sin(\psi^*) = \frac{1 - \Omega}{\epsilon}.
\]
A stable fixed point exists when the phase locking condition \(|1 - \Omega| < \epsilon\) is satisfied. The two fixed points in one period \([0, 2\pi)\) are:
\[
\psi_1^* = \arcsin\left(\frac{1 - \Omega}{\epsilon}\right), \quad \psi_2^* = \pi - \arcsin\left(\frac{1 - \Omega}{\epsilon}\right).
\]
The stable fixed point is \(\psi_1^*\).
\end{proposition}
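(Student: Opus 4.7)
The plan is to treat the equation $\dot{\psi}=(1-\Omega)-\epsilon\sin(\psi)$ as an autonomous scalar ODE on the circle and apply elementary fixed-point analysis. First I would impose $\dot{\psi}=0$, which immediately yields the algebraic condition $\sin(\psi^{*})=(1-\Omega)/\epsilon$. Real solutions exist if and only if the right-hand side lies in $[-1,1]$, i.e.\ $|1-\Omega|\leq\epsilon$, which is precisely the phase-locking inequality stated. When the inequality is strict, the equation $\sin(\psi^{*})=(1-\Omega)/\epsilon$ has exactly two solutions in $[0,2\pi)$, namely $\psi_{1}^{*}=\arcsin((1-\Omega)/\epsilon)$ and $\psi_{2}^{*}=\pi-\arcsin((1-\Omega)/\epsilon)$, by the standard properties of the $\arcsin$ branch.

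Next, I would address stability by linearizing the scalar field $G(\psi):=(1-\Omega)-\epsilon\sin(\psi)$ around each fixed point. Computing $G'(\psi)=-\epsilon\cos(\psi)$, the fixed point $\psi^{*}$ is asymptotically stable iff $G'(\psi^{*})<0$, i.e.\ iff $\cos(\psi^{*})>0$. Since $\psi_{1}^{*}\in[-\pi/2,\pi/2]$, we have $\cos(\psi_{1}^{*})\geq 0$ with strict inequality whenever $|1-\Omega|<\epsilon$, so $\psi_{1}^{*}$ is stable. Conversely, $\psi_{2}^{*}\in[\pi/2,3\pi/2]$ gives $\cos(\psi_{2}^{*})\leq 0$, so $\psi_{2}^{*}$ is unstable. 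At the boundary $|1-\Omega|=\epsilon$, the two fixed points collide at $\psi^{*}=\pm\pi/2$ and $G'(\psi^{*})=0$, which I would identify (as a remark) as a saddle-node bifurcation on the circle.

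There is no serious obstacle here; the analysis is elementary once the governing equation is recognized as a one-dimensional autonomous flow on $\mathbb{S}^{1}$. The only point requiring a little care is distinguishing the two branches of $\arcsin$ so as to assign stability unambiguously, and noting that the boundary case $|1-\Omega|=\epsilon$ is degenerate and must be excluded from the statement that a \emph{stable} fixed point exists. With those caveats noted, the proof reduces to the computation of $G'(\psi^{*})$ and an inspection of the sign of $\cos(\psi^{*})$ on the two relevant arcs of the circle.
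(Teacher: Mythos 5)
Your proposal is correct and follows essentially the same route as the paper's proof: set the right-hand side to zero to obtain \(\sin(\psi^*)=(1-\Omega)/\epsilon\), deduce the existence condition \(|1-\Omega|\leq\epsilon\), and classify stability via the sign of \(-\epsilon\cos(\psi^*)\) at the two \(\arcsin\) branches. Your added remark on the degenerate boundary case \(|1-\Omega|=\epsilon\) as a saddle-node bifurcation on the circle is a nice touch not present in the paper, but the core argument is identical.
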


\begin{proof}
We find the fixed points by setting the right-hand side of the phase equation to zero:
\[
\frac{d\psi}{dt} = (1 - \Omega) - \epsilon \sin(\psi) = 0.
\]
Solving for \(\sin(\psi)\) gives the fixed point condition:
\[
\sin(\psi^*) = \frac{1 - \Omega}{\epsilon}. \tag{1}
\]

For a solution \(\psi^*\) to exist, the right-hand side must lie within the range of the sine function, and hence we must have
\[
\left| \frac{1 - \Omega}{\epsilon} \right| \leq 1.
\]
Multiplying both sides by \(\epsilon\) yields the necessary condition for phase locking:
\[
|1 - \Omega| \leq \epsilon.
\]
Assuming the strict inequality \(|1 - \Omega| < \epsilon\) holds, there are two solutions in one period \([0, 2\pi)\), as stated in the proposition: \(\psi_1^*\) in the \(4^{\text{th}}\) quadrant (or \(1^{\text{st}}\) if \(1-\Omega<0\)) and \(\psi_2^*\) in the \(2^{\text{nd}}\) quadrant (or \(3^{\text{rd}}\) if \(1-\Omega<0\)).

To determine stability, we examine the derivative of the phase dynamics with respect to the phase $\psi$ as
\[
\frac{d}{d\psi}\left(\frac{d\psi}{dt}\right) = -\epsilon \cos(\psi).
\]
A fixed point is stable if this derivative is negative.
\begin{itemize}
	\item At \(\psi_1^*\), \(\cos(\psi_1^*) > 0\), so the derivative is \(-\epsilon \cos(\psi_1^*) < 0\). This point is stable.
	\item At \(\psi_2^*\), \(\cos(\psi_2^*) < 0\), so the derivative is \(-\epsilon \cos(\psi_2^*) > 0\). This point is unstable.
\end{itemize}

Therefore, the stable phase-locked state is given by the fixed point where \(\cos(\psi^*) > 0\), which is \(\psi_1^* = \arcsin\left(\frac{1 - \Omega}{\epsilon}\right)\).
\end{proof}

\subsection{Numerical Illustration of Phase Reduction Validity}

Consider the dynamical system from Example \ref{ex:simple_isochrons}:
\[
\dot{\bm{x}}=f(\bm{x})=\begin{pmatrix}x-y-x(x^{2}+y^{2})\\ x+y-y(x^{2}+y^{2})\end{pmatrix},
\]
where \(\bm{x}=(x,y)^T\). The phase sensitivity function is \(Z(\theta)=(-\sin\theta,\cos\theta)\).

A small periodic perturbation
\[
\epsilon p(\bm{x},t)=\epsilon\sin(\Omega t), \quad \epsilon\ll 1
\]
is applied in the \(x\) direction. Then, the perturbed system is given by
\[
\dot{\bm{x}}=f(\bm{x})+\epsilon\begin{pmatrix}\sin(\Omega t)\\ 0\end{pmatrix}.
\]
Assume \(\Omega\approx 1\).

The phase dynamics of the perturbed system is given by
\[
\frac{d\theta(t)}{dt}=1+\epsilon Z(\theta)\cdot p(t)+\mathcal{O}(\epsilon^2)=1-\epsilon\sin\theta\sin(\Omega t)+\mathcal{O}(\epsilon^2).
\]
Introducing \(\psi=\theta-\Omega t\):
\[
\frac{d\psi(t)}{dt}=1-\Omega-\epsilon\sin(\psi+\Omega t)\sin(\Omega t)+O(\epsilon^2).
\]
Averaging over period \(T=\frac{2\pi}{\Omega}\), we get
\[
\frac{d\psi(t)}{dt}=1-\Omega-\epsilon\bar{\Gamma}(\psi), \quad \bar{\Gamma}(\psi)=\frac{1}{2}\cos\psi.
\]
The validity of this first-order averaged phase equation is confirmed by numerical simulation. As shown in \Cref{fig:perturbed_trajectory}, the phase dynamics of the full system are accurately captured by the reduced model, with the error scaling as \(\mathcal{O}(\epsilon)\). Furthermore, the evolution of the slow phase variable \(\psi(t)\) demonstrates the time-scale separation that justifies the averaging procedure.

\begin{figure}[!ht]
\centering
\includegraphics[width=0.95\textwidth]{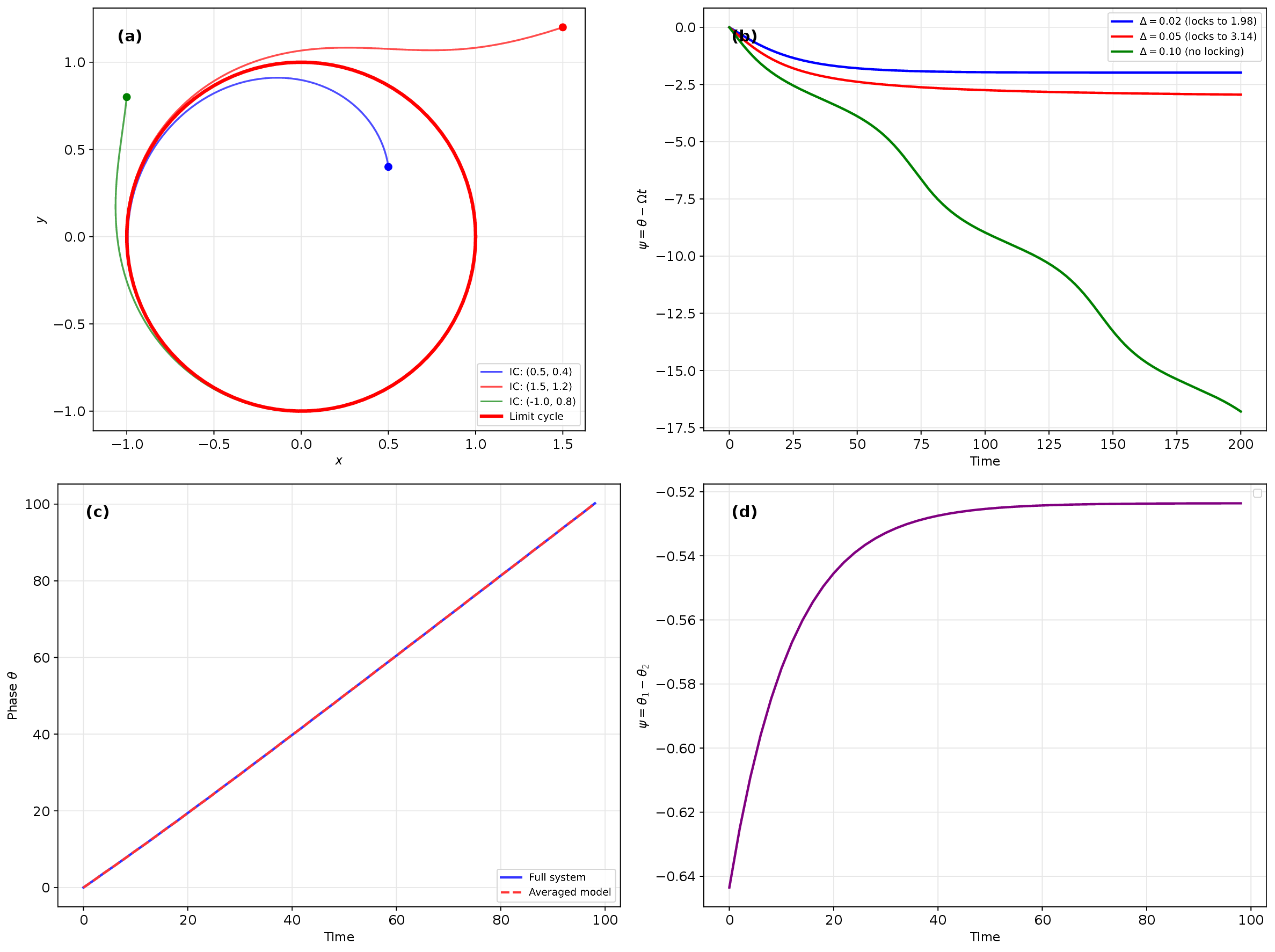}
\caption{\textbf{Phase locking and the accuracy of the averaged phase model.} (a) Stable limit cycle (red) in the $(x,y)$-plane with trajectories (colored lines) converging from different initial conditions. (b) Phase locking behavior for different frequency detunings $\Delta = \Omega-1$ under weak periodic forcing. When $|\Delta|< \epsilon/2$, stable phase-locked states emerge (blue and red curves), where the oscillator frequency entrained to the forcing frequency. For larger detunings (green curve), phase locking does not occur. (c) Comparison between the exact phase dynamics of the full, perturbed system and the dynamics predicted by the averaged phase model (Theorem \ref{thm:averaged_reduction}). The close match demonstrates the model's effectiveness. (d) Evolution of the phase difference \(\psi=\theta_1 - \theta_2\) for two weakly coupled oscillators (as in Section 5.1). The convergence to a stable, constant phase difference $\psi^*$ illustrates the phenomenon of phase locking in a network context.}
\label{fig:phase_locking_accuracy}
\end{figure}

\section{Applications to Coupled Oscillator Networks}
\label{sec:applications}

The formalism developed for a single weakly forced oscillator extends naturally to networks of coupled oscillators. Although we initially consider periodic coupling for clarity, many realistic network interactions exhibit quasi-periodic or almost-periodic behavior due to heterogeneous oscillator frequencies or complex external inputs. The almost-periodic averaging theory developed in Section 2.2 provides the mathematical foundation for these more general cases.

\subsection{Two Weakly Coupled Oscillators}

\begin{theorem}[Phase Reduction for Two Weakly Coupled Oscillators \cite{kuramoto1984chemical, ashwin2016phase, winfree1967biological}]
    Consider two weakly coupled oscillators
\[
\frac{dx_{1}}{dt}=f_{1}(x_{1})+\epsilon g_{1}(x_{1},x_{2}), \quad \frac{dx_{2}}{dt}=f_{2}(x_{2})+\epsilon g_{2}(x_{1},x_{2}),
\]
where \(x_{i}\in\mathbb{R}^{n}\), and \(\epsilon \ll 1\). Assume each isolated system \(\dot{x}_i = f_i(x_i)\) has an exponentially stable limit cycle \(\gamma_i\) with natural frequency \(\Omega_i\). Let \(\Theta_i\) be the asymptotic phase function and \(Z_i(\theta_i) = \nabla\Theta_i|_{\gamma_i(\theta_i)}\) be the phase sensitivity function for each oscillator. Furthermore, assume the natural frequencies are close, i.e., \(|\Omega_1 - \Omega_2| = \mathcal{O}(\epsilon)\).

Then, to first order in \(\epsilon\), the dynamics of the coupled system are described by the phase model
\[
\frac{d\theta_{1}}{dt}=\Omega_{1}+\epsilon \bar{q}_{1}(\theta_{2}-\theta_{1}) + \mathcal{O}(\epsilon^{2}), \quad 
\frac{d\theta_{2}}{dt}=\Omega_{2}+\epsilon \bar{q}_{2}(\theta_{1}-\theta_{2}) + \mathcal{O}(\epsilon^{2}),
\]
where the coupling functions \citep{Stankovski2017coupling} \(\bar{q}_{i}\) are given by
\[
\bar{q}_{i}(\phi)=\frac{1}{T}\int_{0}^{T}Z_{i}(s)\cdot g_{i}(\gamma_{i}(s),\gamma_{j}(s+\phi))\,ds, \quad i=1,2.
\]
Here, \(T\) is a suitable averaging period.
\end{theorem}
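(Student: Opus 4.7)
The plan is to view the coupled system as a small perturbation of the uncoupled product flow $\dot{x}_1 = f_1(x_1)$, $\dot{x}_2 = f_2(x_2)$, whose extended state space contains the invariant two-torus $\mathbb{T}^2 := \gamma_1 \times \gamma_2$. Since each $\gamma_i$ is exponentially stable, this torus is normally hyperbolic in the sense of Theorem \ref{thm:normal_hyperbolicity}, so for $\epsilon$ sufficiently small it persists as a nearby attracting invariant torus. In particular, any trajectory $(x_1(t), x_2(t))$ starting in a neighborhood of $\mathbb{T}^2$ remains $\mathcal{O}(\epsilon)$-close to it for all positive time, which is the two-oscillator analogue of Theorem \ref{thm:perturbed_invariance} and justifies projecting the full dynamics onto phase coordinates.

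Given this invariance, I would define $\theta_i(t) := \Theta_i(x_i(t))$ using the asymptotic phase functions of each isolated system. Applying the chain rule, together with the identity $\nabla\Theta_i(x_i)\cdot f_i(x_i) = \Omega_i$ (the natural-frequency analogue of Proposition 3.2), gives the exact non-autonomous equations
\begin{equation*}
\frac{d\theta_i}{dt} = \Omega_i + \epsilon\,\nabla\Theta_i(x_i)\cdot g_i(x_1, x_2), \qquad i=1,2.
\end{equation*}
Because $|x_i - \gamma_i(\theta_i)| = \mathcal{O}(\epsilon)$ and both $\nabla\Theta_i$ and $g_i$ are $C^1$, a Taylor expansion in the transverse direction substitutes $\nabla\Theta_i(x_i)$ by $Z_i(\theta_i)$ and $g_i(x_1,x_2)$ by $g_i(\gamma_1(\theta_1), \gamma_2(\theta_2))$, with all corrections absorbed into an $\mathcal{O}(\epsilon^2)$ remainder, exactly as in the proof of Theorem \ref{thm:phase_reduction}. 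This yields the on-cycle phase model
\begin{equation*}
\frac{d\theta_i}{dt} = \Omega_i + \epsilon\, Z_i(\theta_i)\cdot g_i(\gamma_1(\theta_1), \gamma_2(\theta_2)) + \mathcal{O}(\epsilon^2).
\end{equation*}

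To extract the slow coupling dynamics, I would pass to slow phases $\psi_i := \theta_i - \Omega_i t$, so that $\dot{\psi}_i = \mathcal{O}(\epsilon)$ and the explicit time dependence enters the right-hand side only through the fast variables $\Omega_i t$. The near-resonance hypothesis $|\Omega_1 - \Omega_2| = \mathcal{O}(\epsilon)$ then implies that $\psi_2 - \psi_1 = \theta_2 - \theta_1 + \mathcal{O}(\epsilon t)$ varies only on the slow time scale. Invoking periodic averaging (Theorem \ref{thm:periodic-averaging}) produces an autonomous equation in the slow variables, and the change of variable $u = \theta_i + \Omega s$ inside the time average converts the integrand into the stated form
\begin{equation*}
\bar{q}_i(\phi) = \frac{1}{T}\int_0^T Z_i(s)\cdot g_i\bigl(\gamma_i(s), \gamma_j(s+\phi)\bigr)\,ds,
\end{equation*}
with $\phi = \theta_j - \theta_i$ and $T = 2\pi/\Omega$. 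Returning to the original phase variables yields the claimed reduced equations, and the averaging theorem guarantees that solutions of the full and reduced phase systems remain $\mathcal{O}(\epsilon)$-close on the time scale $t \sim 1/\epsilon$.

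The main obstacle, as I see it, is justifying the averaging step when the natural frequencies are incommensurate. If $\Omega_1/\Omega_2$ is irrational, the coupling term becomes genuinely quasi-periodic in $t$ once the leading phase evolution $\theta_i \approx \Omega_i t$ is inserted, so in general one must invoke the almost-periodic version of Lemma \ref{lem:almost_periodic_avg} rather than the periodic Theorem \ref{thm:periodic-averaging}. The near-resonance assumption is precisely what circumvents this: it collapses the two-frequency problem to a single fast time modulated by the slow variable $\theta_2 - \theta_1$, which is what permits the averaged coupling to depend on the phase difference alone and makes the choice of the "suitable averaging period" $T$ unambiguous.
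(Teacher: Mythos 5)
Your proposal follows essentially the same route as the paper's own (very brief) proof: the chain rule with $\nabla\Theta_i\cdot f_i=\Omega_i$, evaluation of the coupling terms on the unperturbed limit cycles with an $\mathcal{O}(\epsilon^2)$ remainder, and periodic averaging justified by the slow evolution of $\theta_2-\theta_1$ under the near-resonance hypothesis. Your write-up is in fact more complete than the paper's sketch, since you additionally justify the $\mathcal{O}(\epsilon)$-closeness via normal hyperbolicity of the product torus $\gamma_1\times\gamma_2$ and correctly flag the quasi-periodicity obstruction that the assumption $|\Omega_1-\Omega_2|=\mathcal{O}(\epsilon)$ is there to remove.
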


\begin{proof}
The derivation follows from applying the chain rule to \(\dot{\theta}_i = \nabla\Theta_i(x_i) \cdot \dot{x}_i\), evaluating the coupling terms on the unperturbed limit cycles \(\gamma_i(\theta_i)\) and \(\gamma_j(\theta_j)\), and applying the averaging \cref{thm:periodic-averaging} to the resulting equations \cite{sanders1985averaging, kuramoto1984chemical}.. The assumption of close natural frequencies ensures that the phase difference \(\theta_j - \theta_i\) evolves slowly, making the averaging valid over the fast period \(T\). 
\end{proof}
This phase reduction reveals the fundamental mechanism of synchronization. The dynamics of the phase difference \(\phi = \theta_2 - \theta_1\) evolve on a slow timescale, described to first order by the equation:
\[
\frac{d\phi}{dt} = (\Omega_2 - \Omega_1) + \epsilon \Gamma(\phi) + \mathcal{O}(\epsilon^2),
\]
where \(\Gamma(\phi) = \bar{q}_2(-\phi) - \bar{q}_1(\phi)\) is the effective coupling or phase interaction function. Synchronization, in the form of a stable fixed point \(\phi^*\) where \(d\phi/dt = 0\), occurs when the frequency detuning \(\Omega_2 - \Omega_1\) is balanced by the average coupling force \(\epsilon \Gamma(\phi^*)\). The stability of this phase-locked state is determined by the slope of \(\Gamma(\phi)\) at \(\phi^*\). This powerful result tells us that the complex interaction of the two oscillators \(g_i(x_1, x_2)\) is distilled into the simple, periodic function \(\Gamma(\phi)\), which dictates all possible synchronization regimes. These predicted dynamics are illustrated in \cref{fig:coupled_oscillators}.

\subsection{Networks of Weakly Coupled Oscillators}

The phase reduction approach for oscillator networks developed here provides the theoretical foundation for the network reconstruction methods presented in \cite{gebrezabher2023thesis}. In that work, techniques for inferring phase dynamics and coupling functions from experimental time series data are developed and applied to both synthetic and real-world oscillator networks.

\begin{theorem}[Phase Reduction for a Network of Weakly Coupled Oscillators \cite{ashwin2016phase, nakao2016phase, kuramoto1984chemical}]
    Consider a network of \(N\) oscillators governed by
\[
\frac{d}{dt}x_{i}(t)=f_{i}(x_{i})+\epsilon\sum_{j=1}^{N}A_{ij}\,h(x_{i},x_{j}), \quad i=1,\ldots,N,
\]
where \(x_{i}(t)\in U_{i}\subset\mathbb{R}^{n}\), and \(\epsilon \ll 1\). Assume each isolated system \(\dot{x}_i = f_i(x_i)\) has an exponentially stable limit cycle \(\gamma_i\) with natural frequency \(\Omega_i\). Let \(\Theta_i\) be the asymptotic phase function and \(Z_i(\theta_i) = \nabla\Theta_i|_{\gamma_i(\theta_i)}\) be the phase sensitivity function for each oscillator. Furthermore, assume the natural frequencies are close, i.e., \(|\Omega_i - \Omega_j| = \mathcal{O}(\epsilon)\).
Then, to first order in \(\epsilon\), the dynamics of the network are described by the phase model
\[
\frac{d\theta_{i}}{dt}=\Omega_{i}+\epsilon\sum_{j=1}^{N}A_{ij}\,\bar{q}_{ij}(\theta_{j}-\theta_{i}) + \mathcal{O}(\epsilon^{2}), \quad i=1,\ldots,N,
\]
where the coupling functions \(\bar{q}_{ij}\) are given by
\[
\bar{q}_{ij}(\phi)=\frac{1}{T}\int_{0}^{T}Z_{i}(s)\cdot h(\gamma_{i}(s),\gamma_{j}(s+\phi))\,ds.
\]
Here, \(T\) is a suitable averaging period (e.g., the period of the mean-field oscillation).
\end{theorem}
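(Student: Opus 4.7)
The plan is to generalize the argument for two coupled oscillators to $N$ oscillators, proceeding in three structured stages: invariance and proximity to each limit cycle, exact phase dynamics via the chain rule, and averaging over the fast timescale. First I would establish a network analogue of Theorem~\ref{thm:perturbed_invariance}: view the product manifold $\Gamma := \gamma_1 \times \cdots \times \gamma_N \subset \mathbb{R}^{nN}$ as a normally hyperbolic invariant torus for the unperturbed decoupled system. Since normal hyperbolicity persists under $C^1$-small perturbations (Theorem~\ref{thm:normal_hyperbolicity}), and the coupling term $\epsilon \sum_j A_{ij} h(x_i, x_j)$ is such a perturbation, the product torus deforms to a nearby invariant manifold $\Gamma_\epsilon$, and every trajectory starting in a small tubular neighborhood of $\Gamma$ remains $\mathcal{O}(\epsilon)$-close to it for all $t \geq 0$. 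In particular, each component $x_i(t)$ stays within $\mathcal{O}(\epsilon)$ of $\gamma_i$, so the asymptotic phase $\Theta_i(x_i(t))$ is well defined and smooth along the trajectory.

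Next I would apply the chain rule to $\theta_i(t) := \Theta_i(x_i(t))$, using the key identity $\nabla\Theta_i(x) \cdot f_i(x) = \Omega_i$ valid throughout the basin of attraction of $\gamma_i$ (the $N$-oscillator version of the Phase Dynamics proposition):
\begin{equation*}
\frac{d\theta_i}{dt} = \nabla\Theta_i(x_i) \cdot \Bigl[f_i(x_i) + \epsilon \sum_{j=1}^N A_{ij} h(x_i, x_j)\Bigr] = \Omega_i + \epsilon \sum_{j=1}^N A_{ij} \, \nabla\Theta_i(x_i) \cdot h(x_i, x_j).
\end{equation*}
This equation is \emph{exact}, but still involves the full state. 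To reduce it to a closed system in $\theta_1,\ldots,\theta_N$, I would Taylor-expand both $\nabla\Theta_i$ and $h$ about the nearest limit-cycle points using $x_i(t) = \gamma_i(\theta_i(t)) + \mathcal{O}(\epsilon)$, exactly as in Theorem~\ref{thm:phase_reduction}. This yields
\begin{equation*}
\frac{d\theta_i}{dt} = \Omega_i + \epsilon \sum_{j=1}^N A_{ij} \, Z_i(\theta_i) \cdot h\bigl(\gamma_i(\theta_i), \gamma_j(\theta_j)\bigr) + \mathcal{O}(\epsilon^2),
\end{equation*}
which is the instantaneous (non-averaged) phase reduction.

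The final step is to apply the averaging theorem to extract the slow dynamics. Under the near-resonance hypothesis $|\Omega_i - \Omega_j| = \mathcal{O}(\epsilon)$, I would introduce slow phase deviations $\psi_i := \theta_i - \Omega t$ for a common reference frequency $\Omega$ close to all $\Omega_i$; then $\dot\psi_i = \mathcal{O}(\epsilon)$ while $\theta_i = \psi_i + \Omega t$ oscillates on the fast $\mathcal{O}(1)$ timescale. Substituting and averaging the right-hand side over one period $T = 2\pi/\Omega$ (holding $\psi_i$ fixed, as justified by Theorem~\ref{thm:periodic-averaging}) converts the coupling term to
\begin{equation*}
\bar q_{ij}(\psi_j - \psi_i) = \frac{1}{T}\int_0^T Z_i(s) \cdot h\bigl(\gamma_i(s), \gamma_j(s + (\psi_j - \psi_i))\bigr)\,ds,
\end{equation*}
after a shift of integration variable $s \mapsto \theta_i$. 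Rewriting in terms of the $\theta_i$ recovers the claimed phase model, with the averaging theorem guaranteeing $\mathcal{O}(\epsilon)$ accuracy on the timescale $t \sim 1/\epsilon$.

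The main obstacle is the averaging step in the general network setting. Periodic averaging applies cleanly only when a single common period $T$ can be identified; for genuinely heterogeneous natural frequencies, the integrand is only quasi-periodic in $t$, and I would need to invoke the almost-periodic averaging result (Lemma~\ref{lem:almost_periodic_avg}) instead of Theorem~\ref{thm:periodic-averaging}. This requires checking that the full coupling vector field is almost-periodic in $t$ uniformly on compact sets in the slow variables $\psi = (\psi_1,\ldots,\psi_N)$, which is straightforward when each $\gamma_i$, $Z_i$, and $h$ are smooth but must be stated carefully. A secondary technical point is to ensure that the $\mathcal{O}(\epsilon)$ error in Taylor-expanding $\nabla\Theta_i$ and $h$ near the limit cycle, when summed over all $N$ neighbors, does not degrade the order estimate; this holds provided $N$ is fixed or the network coupling is appropriately normalized, which I would note as a scaling assumption.
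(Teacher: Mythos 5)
Your proposal follows essentially the same route as the paper's proof, which simply states that the derivation repeats the two-oscillator pattern (chain rule applied to \(\Theta_i(x_i(t))\), evaluation of the coupling terms on the unperturbed limit cycles, then averaging) with an additional summation over network connections. Your version is considerably more explicit than the paper's one-line argument, and your two added points are genuinely worthwhile: the normally hyperbolic product-torus justification for why each \(x_i(t)\) stays \(\mathcal{O}(\epsilon)\)-close to \(\gamma_i\), and the observation that for genuinely heterogeneous natural frequencies the integrand is only quasi-periodic in \(t\), so one must invoke the almost-periodic averaging lemma rather than periodic averaging --- a subtlety the paper sidesteps by postulating ``a suitable averaging period'' and deferring the issue to its later section on almost-periodic coupling.
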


\begin{proof}
The derivation follows the same pattern as the two-oscillator case, with the additional summation over network connections and application of the averaging theorem to each coupling term \cite{ashwin2016phase, nakao2016phase}. The network phase reduction formalism extends the classical Kuramoto model \cite{kuramoto1984chemical} to general coupled oscillator systems.
\end{proof}

\subsection{Networks with Almost-Periodic Coupling}

In many realistic scenarios, network interactions are not strictly periodic but exhibit almost-periodic behavior due to heterogeneous oscillator frequencies, time-varying coupling strengths, or external multi-frequency forcing. The almost-periodic averaging theory developed in Section 2.2 provides the mathematical framework for these cases.

\begin{theorem}[Phase Reduction for Networks with Almost-Periodic Coupling]\label{thm:almost_periodic_network}
    Consider a network of oscillators with almost-periodic coupling given by
\begin{equation}\label{eq:almost_periodic_network}
\frac{d}{dt}x_{i}(t)=f_{i}(x_{i})+\epsilon\sum_{j=1}^{N}A_{ij}(t)h(x_{i},x_{j},t), \quad i=1,\ldots,N,
\end{equation}
where \(A_{ij}(t)\) and \(h(x_{i},x_{j},t)\) are almost-periodic in \(t\) and uniformly in \(x\) on compact sets. Assume each isolated oscillator has an exponentially stable limit cycle \(\gamma_i\) with frequency \(\Omega_i\).
Then, to first order in \(\epsilon\), the phase-reduced dynamics are given by
\begin{align}\label{eq:quasi_averaged_eqns}
    \frac{d\theta_{i}}{dt}=\Omega_{i}+\epsilon\sum_{j=1}^{N}\bar{q}_{ij}(\theta_{j}-\theta_{i}) + \mathcal{O}(\epsilon^{2}),
\end{align}
where the averaged coupling functions are defined by the mean value
\begin{equation}\label{eq:quasi_averaged_coupling}
    \bar{q}_{ij}(\phi) = \lim_{T\to\infty}\frac{1}{T}\int_{0}^{T} Z_{i}(s) \cdot \left[A_{ij}(t) h(\gamma_{i}(s), \gamma_{j}(s+\phi), t)\right]\, dt.
\end{equation}
\end{theorem}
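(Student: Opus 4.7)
The plan is to parallel the derivation of the periodic network reduction from the previous theorem, replacing the classical periodic averaging Theorem~\ref{thm:periodic-averaging} with the almost-periodic averaging Lemma~\ref{lem:almost_periodic_avg} at the final step. The argument splits naturally into three parts: persistence of trajectories in an $\mathcal{O}(\epsilon)$-neighborhood of the product limit cycle $\gamma_1\times\cdots\times\gamma_N$, Taylor expansion to obtain a non-autonomous phase equation, and almost-periodic averaging to produce the mean-value coupling functions.

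First, I would show that each $x_i(t)$ stays in an $\mathcal{O}(\epsilon)$-neighborhood of $\gamma_i$. This parallels Theorem~\ref{thm:perturbed_invariance}: the time-one map of the isolated subsystem is a contraction near $\gamma_i$, and the almost-periodic coupling contributes a uniformly bounded $\mathcal{O}(\epsilon)$ perturbation (almost-periodic functions are bounded by Definition~\ref{def:almost_periodic}), so the same Banach fixed-point argument yields a positively invariant neighborhood of the product cycle. The phase $\theta_i(t):=\Theta_i(x_i(t))$ is therefore well-defined, and the chain rule together with the identity $\nabla\Theta_i\cdot f_i\equiv\Omega_i$ gives
\[
\frac{d\theta_i}{dt}=\Omega_i+\epsilon\,\nabla\Theta_i(x_i)\cdot\sum_{j=1}^{N}A_{ij}(t)\,h(x_i,x_j,t).
\]
Using $x_k=\gamma_k(\theta_k)+\mathcal{O}(\epsilon)$ and the $C^{1}$ regularity of $\Theta_i$ and $h$, Taylor expansion about the limit cycles produces the non-autonomous phase equation
\[
\frac{d\theta_i}{dt}=\Omega_i+\epsilon\sum_{j=1}^{N}Z_i(\theta_i)\cdot\bigl[A_{ij}(t)\,h(\gamma_i(\theta_i),\gamma_j(\theta_j),t)\bigr]+\mathcal{O}(\epsilon^{2}).
\]

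Next, I would introduce slow phases $\psi_i=\theta_i-\Omega_i t$, so that $\dot\psi_i=\mathcal{O}(\epsilon)$ while the explicit $t$-dependence through $A_{ij}$ and $h$ remains fast and almost-periodic. Collecting the $\psi_i$ into a single state vector $\psi\in\mathbb{T}^{N}$, the right-hand side viewed as a function of $(\psi,t)$ is almost-periodic in $t$ uniformly on compact $\psi$-sets: Bohr's closure properties (finite sums, finite products, and compositions with bounded smooth functions preserve almost-periodicity) apply to the $1$-periodic $Z_i,\gamma_i$ and to the almost-periodic $A_{ij}$ and $h$. Lemma~\ref{lem:almost_periodic_avg} then yields an averaged system whose vector field is the mean $M[\,\cdot\,]$ from Definition~\ref{def:mean_value}, giving exactly $\bar q_{ij}(\theta_j-\theta_i)$ as defined in \eqref{eq:quasi_averaged_coupling}. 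The $\mathcal{O}(\epsilon)$ approximation on time scales $t\sim 1/\epsilon$ then follows from the lemma's quantitative conclusion.

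The main obstacle I expect is the frequency-module bookkeeping required to ensure that the mean reduces to a function of the differences $\theta_j-\theta_i$ alone. One must track which combinations of exponents in $\mathrm{Mod}(A_{ij})+\mathrm{Mod}(h)+\mathbb{Z}\Omega_i+\mathbb{Z}\Omega_j$ produce a nonzero mean under the substitution $\theta_i=\psi_i+\Omega_i t$, while non-resonant Fourier components average out uniformly by Definition~\ref{def:frequency_module}. A secondary technical point is verifying uniform-in-$\psi$ almost-periodicity of the composed vector field on the compact invariant set built in the first step; this follows from uniform continuity on compacta but must be checked so that Lemma~\ref{lem:almost_periodic_avg} applies verbatim.
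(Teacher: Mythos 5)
Your proposal follows essentially the same route as the paper's own proof, which is only a two-line sketch: apply the chain rule for phase reduction and then invoke Lemma~\ref{lem:almost_periodic_avg} for the almost-periodic averaging, with the near-identity transformation supplying the $\mathcal{O}(\epsilon)$ bound on times $t\sim 1/\epsilon$. Your version is considerably more detailed, and the frequency-module bookkeeping you flag as the main obstacle — justifying that the mean value depends only on the differences $\theta_j-\theta_i$ rather than on $\theta_i$ and $\theta_j$ separately — is a genuine subtlety that the paper's proof passes over in silence.
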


\begin{proof}
The proof follows from applying the chain rule for phase reduction and then invoking Lemma \ref{lem:almost_periodic_avg} for the almost-periodic averaging. The existence of the mean values is guaranteed by Definition \ref{def:mean_value}, and the near-identity transformation ensures the \(O(\epsilon)\) approximation on time scales \(t \sim 1/\epsilon\). This extends the classical averaging method of \cite{bogoliubov1961asymptotic} to almost-periodic network coupling.
\end{proof}

\begin{example}[Network with Quasi-Periodic Forcing]
Consider a network 
\[
\frac{d}{dt}x_{i}(t)=f_{i}(x_{i})+\epsilon\sum_{j=1}^{N}A_{ij}(t)h(x_{i},x_{j}), \quad i=1,\ldots,N,
\]
where the coupling strengths vary quasi-periodically:
\[
A_{ij}(t) = a_{ij} + b_{ij}\cos(\nu_1 t) + c_{ij}\cos(\nu_2 t),
\]
with incommensurate frequencies \(\nu_1/\nu_2 \notin \mathbb{Q}\). This generates an almost-periodic coupling structure \cite{bohr1947almost}. 

Using \Cref{thm:almost_periodic_network}, the reduced phase equation is given by
\[
\frac{d\theta_i}{dt} = \omega_i + \epsilon \sum_{j=1}^{N} \left[ a_{ij} + b_{ij}\cos(\nu_1 t) + c_{ij}\cos(\nu_2 t) \right] H_{ij}(\theta_i, \theta_j) + \mathcal{O}(\epsilon^2)
\tag{5}
\]
where the coupling function \( H_{ij}(\theta_i, \theta_j) \equiv Z_i(\theta_i) \cdot {h}(\gamma_i(\theta_i), \gamma_j(\theta_j)) \). This is the exact, non-autonomous phase model. The presence of explicit time dependence \( t \) on the right-hand side makes it difficult to analyze. 

Using the frequency module approach (Definition \ref{def:frequency_module}) and Eq.~\eqref{eq:quasi_averaged_coupling}, the averaged coupling functions are given by
\[
\bar{q}_{ij}(\theta_i, \theta_j) = \lim_{T \to \infty} \frac{1}{T} \int_0^T \left[ a_{ij} + b_{ij}\cos(\nu_1 t) + c_{ij}\cos(\nu_2 t) \right] H_{ij}(\theta_i, \theta_j) \, dt.
\]
Since \( H_{ij}(\theta_i, \theta_j) \) does not depend on time \( t \), we can factor it out of the time average and obtain
\begin{align*}
    \bar{q}_{ij}(\theta_i, \theta_j) &= H_{ij}(\theta_i, \theta_j) \times \lim_{T \to \infty} \frac{1}{T} \int_0^T \left[ a_{ij} + b_{ij}\cos(\nu_1 t) + c_{ij}\cos(\nu_2 t) \right] dt\\
    &= H_{ij}(\theta_i, \theta_j) \times \left\{\lim_{T \to \infty} \frac{1}{T} \int_0^T a_{ij} \, dt + \lim_{T \to \infty} \frac{1}{T} \int_0^T b_{ij}\cos(\nu_1 t) \, dt + \lim_{T \to \infty} \frac{1}{T} \int_0^T c_{ij}\cos(\nu_2 t) \, dt \right\}\\
    &= H_{ij}(\theta_i, \theta_j) \times a_{ij}\\
    &= a_{ij} \left[ Z_i(\theta_i) \cdot h(\gamma_i(\theta_i), \gamma_j(\theta_j)) \right]
\end{align*}
since \(\lim_{T \to \infty} \frac{1}{T} \int_0^T b_{ij}\cos(\nu_1 t) \, dt = 0 =\lim_{T \to \infty} \frac{1}{T} \int_0^T c_{ij}\cos(\nu_2 t) \, dt \).

The integrals of the cosine terms vanish in the long-time average because they are pure oscillations. The incommensurability \( \nu_1/\nu_2 \notin \mathbb{Q} \) ensures there is no ``resonant" interaction between the cosines that could create a non-zero constant term; they remain independent oscillations.

Substituting back into the averaged equation \eqref{eq:quasi_averaged_eqns}, we arrive at the following autonomous phase model

\[
\frac{d\theta_i}{dt} = \omega_i + \epsilon \sum_{j=1}^{N} a_{ij} \, H_{ij}(\theta_i, \theta_j) + \mathcal{O}(\epsilon^2), \quad i = 1, \dots, N.
\]
This approximation is valid for times of order \( 1/\epsilon \). For longer times, the neglected \( \mathcal{O}(\epsilon^2) \) terms and the small deviations between the true phases \( \theta_i(t) \) and the averaged phases \( \tilde{\theta}_i(t) \) may become significant.

\end{example}

\begin{remark}[Computational Considerations]
For almost-periodic networks (Eq.~\eqref{eq:almost_periodic_network}), the coupling functions \(\bar{q}_{ij}(\phi)\) can be approximated numerically using
\[
\bar{q}_{ij}(\phi) \approx \frac{1}{T}\int_0^T Z_i(\psi_i+\Omega t)\cdot\left[A_{ij}(t)p_{ij}(\psi_i+\Omega t,\psi_j+\Omega t+\phi,t)\right]dt
\]
for sufficiently large \(T\). The data-driven network reconstruction methods developed in \citep{gebrezabher2023thesis} can be extended to estimate these almost-periodic coupling functions from experimental time series data, building on the classical phase reconstruction approaches \citep{kuramoto1984chemical, pikovsky2003synchronization}.
\end{remark}

\begin{remark}[Biological Applications of Almost-Periodic Phase Reduction]
The almost-periodic framework is essential for modeling biological oscillators, which are rarely perfectly identical. For example, in neural circuits controlling rhythmic behaviors like locomotion (e.g., swimming, walking), the constituent neurons often have heterogeneous intrinsic firing rates. The theory explains how weak synaptic coupling, represented by the functions \(\bar{q}_i(\phi)\), can synchronize these non-identical neurons into a coherent, functional rhythm \citep{hoppensteadt1997weakly}. The phase difference between neurons locks to a value \(\phi^*\) that optimizes the motor pattern, even as external inputs modulate their individual frequencies.
\end{remark}

\begin{example}[Synchronization in a Heterogeneous Network]
Consider a simple network of two coupled biological oscillators with a small frequency difference, \(\Omega_2 - \Omega_1 = \delta\). Their phase difference \(\phi\) evolves as \(d\phi/dt = \delta + \epsilon \Gamma(\phi)\). Synchronization occurs when the coupling force \(\epsilon \Gamma(\phi)\) can exactly counterbalance the detuning \(\delta\). A stable fixed point \(\phi^*\) exists where \(\Gamma(\phi^*) = -\delta/\epsilon\) and \(\Gamma'(\phi^*) < 0\). This illustrates a fundamental biological principle: \textit{weak coupling can robustly generate coordinated rhythms from populations of inherently diverse elements}, a key insight from the theory of weakly coupled oscillators \citep{kuramoto1984chemical}.
\end{example}

\begin{figure}[!ht]
\centering
\includegraphics[width=\textwidth]{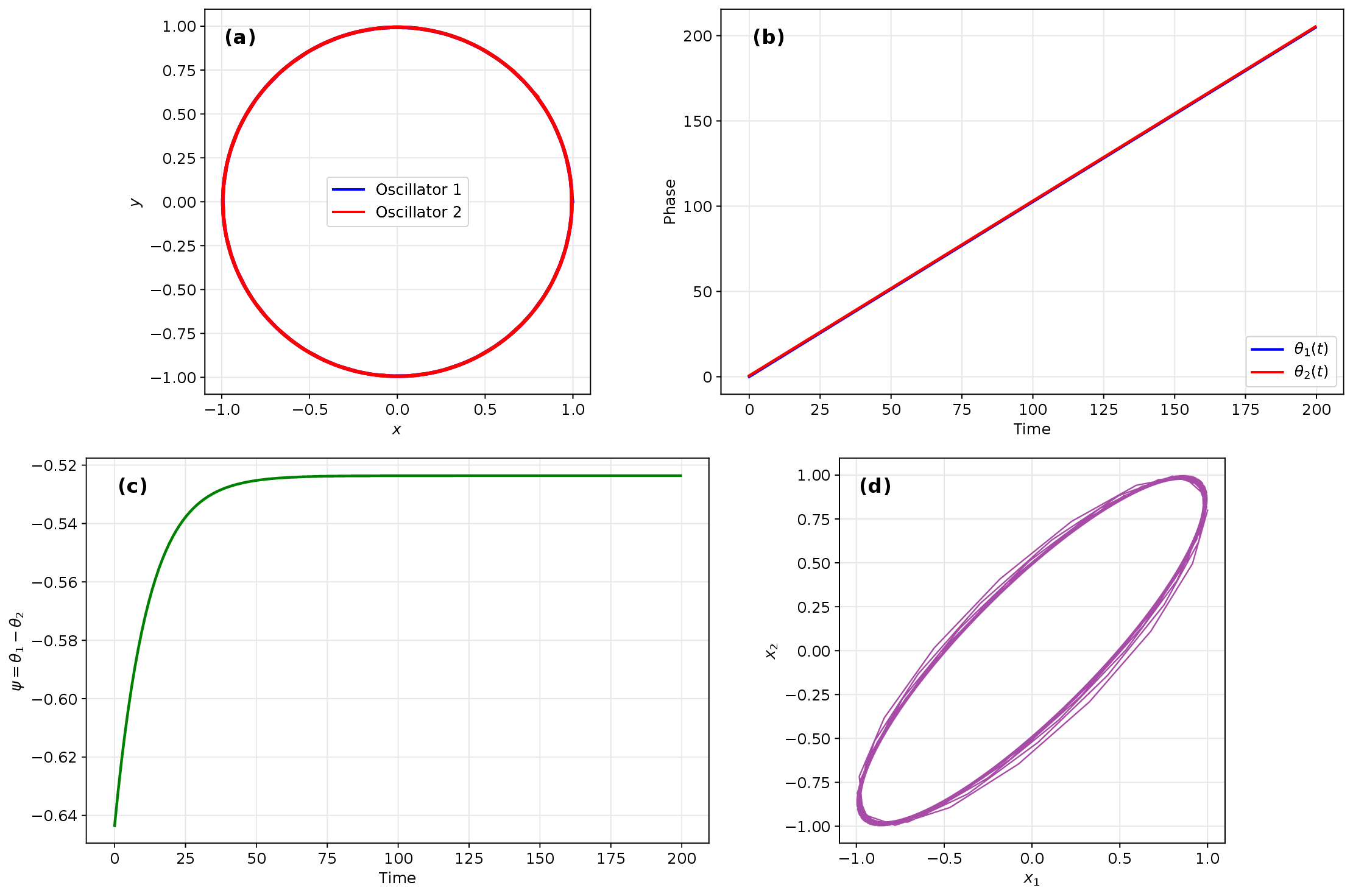}
\caption{\textbf{Dynamics of two weakly coupled oscillators.} (a) Trajectories of coupled oscillators in phase space, showing synchronization tendency. (b) Individual phase evolution demonstrating frequency entrainment. (c) Phase difference evolution showing convergence to a stable phase-locked state $\psi^*$, as predicted by the averaged theory. (d) Phase space relationship between oscillators, illustrating the emergence of a synchronization manifold. The numerical simulation demonstrates how weak coupling can lead to phase synchronization even when natural frequencies differ slightly.}
\label{fig:coupled_oscillators}
\end{figure}

\begin{remark}
The network phase reduction formalism presented here enables the data-driven reconstruction approaches developed in \citep{gebrezabher2023thesis}, where the inverse problem of determining the phase model parameters from the observed dynamics is systematically addressed. This includes methods for estimating phase response curves, coupling functions, and network structure from experimental time series data. The extension to almost-periodic coupling significantly broadens the applicability of these methods to real-world oscillatory networks with heterogeneous frequencies and time-varying interactions.
\end{remark}

\section{Limitations of First-Order Reduction and the Path to Higher-Order Methods}\label{sec:limitations}

The first-order phase reduction has been immensely successful, but possesses fundamental limitations that restrict its applicability \citep{gengel2021high, ashwin2016phase}. The limitations identified in this paper motivate the methodological developments in \citep{gebrezabher2023thesis}, which explores data-driven approaches for going beyond the first-order approximation.

\subsection{Intrinsic Limitations}

\begin{enumerate}
    \item \textbf{Restriction to Weak Coupling:} The approximation is valid only for \(\epsilon \ll 1\). For moderate or strong coupling, higher-order terms become significant, potentially leading to qualitatively incorrect predictions \citep{gengel2021high}.
    \item \textbf{Linear Response Assumption:} The reduction relies on a linear approximation in the isochron coordinates. Finite-sized perturbations that push trajectories far from the limit cycle violate this assumption \citep{schultheiss2011phase}.
\end{enumerate}

\noindent Together, these points delineate the boundary of the classical phase reduction's predictive power. The framework is compelling for analyzing the \textit{weak-coupling regime}, where interactions are gentle, and the system state remains close to the unperturbed limit cycle. However, it cannot capture dynamics driven by strong coupling or large, abrupt perturbations, which can include complex phenomena such as oscillator death or the emergence of new collective states. Recent work on higher-order phase reductions \citep{gengel2021high, eddie2022NatureComm} and isochron-free approaches seeks to push these boundaries, but the first-order theory remains the cornerstone for most analytical investigations of synchronization.

\subsection{Vanishing First-Order Coupling}

A critical limitation occurs in systems with specific symmetries or resonances where the first-order coupling function averages to zero \citep{ashwin2016phase}:
\[
q(\phi)=\frac{1}{2\pi}\int_{0}^{2\pi}Z(s)\cdot h(s,\phi)ds \equiv 0.
\]
In such cases, the first-order reduction predicts no interaction, while the full system may exhibit rich dynamics driven by second-order terms. This phenomenon is common in systems with specific symmetry properties or when the phase response curve and coupling function exhibit certain harmonic structures that lead to cancellation under averaging \citep{ashwin2016phase}.

\begin{example}[Vanishing first-order coupling]
We consider a network of $N$ coupled dynamical systems, where the state of node $j \in \{1,\dots, N\}$ evolves according to
\begin{align}\label{eq:model1}
    \dot{z}_j = f(z_j) + \epsilon \sum_{k = 1}^{N} a_{j k} h(z_j, z_k),
\end{align}
with intrinsic dynamics $f$, coupling strength $\epsilon$, adjacency matrix $\bm{A} = (A_{k l})$, and pairwise interaction function $h$. We assume each uncoupled oscillator has an exponentially stable limit cycle. Specifically, we take $f_k$ to be a Stuart-Landau oscillator \cite{kuznetsov2004}:
\[
f(z_j) = (1 + i\omega)z_j - (1+ic_2)|z_j|^2 z_j,
\]
where $\omega \in \mathbb{R}$ is the natural frequency of the $j$th oscillator. Each oscillator is coupled to its neighbors via a linear direct coupling function
\[
h(z_j, z_k) = z_k.
\]
Given a purely imaginary phase response curve $Z(\theta) = i e^{-i\theta}$, the first-order phase reduction is given by
\[
\dot{\theta}_j = \Omega + \epsilon \sum_{k=1}^N a_{jk} q(\theta_k-\theta_j)
\]
where $\theta_j$ is the phase variable of the $j$th oscillator, $\Omega$ is the actual angular frequency on the limit cycle, and $q$ is the phase coupling function given by 
\[
q(\phi) = \frac{1}{2\pi} \int_{0}^{2\pi} Z(\theta + \phi)\cdot h(\gamma(\theta+\phi),\theta)\, d \theta,
\]
where $\gamma(\theta):=\mathrm{e}^{i\theta}$ is the limit cycle of the unperturbed oscillator with $\dot{\theta} = \Omega = \omega-c_2$. We can show that the time average of the coupling is zero, i.e., $q(\phi) \equiv 0 $. In this case, the first-order reduction fails to capture the true network dynamics, and a second-order reduction is necessary to reveal the effective coupling structure \citep{gengel2021high}.
\end{example}

\subsection{Numerical Verification and the Necessity of Higher-Order Reduction}

As the theory of vanishing coupling predicts \citep{winfree1967biological, kuramoto1984chemical}, our numerical analysis of coupled Stuart-Landau oscillators (Fig. \ref{fig:numerical_verification}) shows clear evidence of the limits of first-order reduction and the need for higher-order methods. 

Figure \ref{fig:numerical_verification}(a) supports the theoretical prediction of vanishing first-order coupling, with $\max|q(\phi)| \approx 2\times10^{-16}$. This indicates that classical phase reduction \citep{winfree1967biological, kuramoto1984chemical, guckenheimer1975isochrons} would not forecast an interaction between oscillators. However, in contrast to this first-order prediction, we see that robust synchronization begins at $\varepsilon \approx 0.05$ for $\Delta\omega = 0.02$. 

The phase dynamics display three distinct regimes illustrated in Fig. \ref{fig:numerical_verification}(b): \textit{persistent drift} ($\varepsilon < 0.05$), where phases drift freely without locking; \textit{transition behavior} near the critical coupling, marked by intermittent phase slips; and \textit{stable phase locking} ($\varepsilon \geq 0.05$), where the phase difference settles at a constant value \citep{pikovsky2003synchronization, hoppensteadt1997weakly}. 

We quantify the synchronization transition in Fig. \ref{fig:numerical_verification}(c) using the root-mean-square measure 
\[
S := \sqrt{\langle (d\phi/dt)^2 \rangle},
\]
which sharply drops to near zero at $\varepsilon_{critical} \approx 0.05$, clearly marking the synchronization threshold. Most importantly, Fig. \ref{fig:numerical_verification}(d) shows the characteristic scaling $\varepsilon_{\text{critical}} \sim \sqrt{\Delta\omega}$ with an exponent of about 0.5. This is the theoretical signature of synchronization strongly influenced by second-order effects \citep{kuramoto1984chemical, gengel2021high}. At the synchronization threshold, second-order terms ($\sim\varepsilon^2$) outweigh first-order contributions by about $10^{14}$ times (Fig. \ref{fig:numerical_verification}(e)), clearly indicating that the observed synchronization results from higher-order coupling mechanisms.

These findings show that first-order phase reduction \citep{winfree1967biological, kuramoto1984chemical, ermentrout1996type} does not capture the true dynamics of the network, even at moderate coupling strengths, and that there is a need for systematic higher-order methods. Consequently, we extend the phase reduction framework to higher orders \citep{Leon2019PhyReviewE, gengel2021high, kuramoto1984chemical, Kralemann2011reconstruction} as
\[
\frac{d\theta}{dt} = 1 + \epsilon \Gamma_1(\theta,t) + \epsilon^2 \Gamma_2(\theta,t) + \cdots + \epsilon^n \Gamma_n(\theta,t) + \mathcal{O}(\epsilon^{n+1}).
\]

The physical meaning of these terms illustrates why higher-order reductions are crucial. The first-order term $\Gamma_1$ captures the immediate phase shift from a perturbation \citep{winfree1967biological, schultheiss2011phase}, while the second-order term $\Gamma_2$ accounts for the lasting amplitude-mediated effect \citep{gengel2021high}. When a perturbation occurs, it not only causes an immediate phase shift but also moves the trajectory away from the limit cycle. The relaxation of this amplitude disturbance back to the limit cycle produces an additional, delayed phase shift captured by $\Gamma_2$ \citep{kuramoto1984chemical}. In systems with vanishing first-order coupling, this amplitude-mediated phase shift becomes the main synchronization mechanism.

For networked systems, this expansion shows important non-pairwise interactions that first-order theory overlooks \citep{ashwin2016phase} as
\[
\frac{d\theta_i}{dt} = \Omega_i + \epsilon\sum_j A_{ij}q^1_{ij}(\theta_i,\theta_j) + \epsilon^2\sum_{j,k}B_{ijk}q^2_{ijk}(\theta_i,\theta_j,\theta_k) + \mathcal{O}(\epsilon^3).
\]

In this equation, the second-order coupling term $q^2_{ijk}$ represents a genuine triplet interaction: the influence of oscillator $j$ on oscillator $i$ is shaped by the state of a third oscillator $k$. This added complexity arises because the signal from oscillator $j$ affects oscillator $i$’s amplitude, which then changes how oscillator $i$ responds to signals from oscillator $k$ \citep{gengel2021high}. Such higher-order interactions can produce rich collective behaviors like clustering, chimera states, and other complex synchronization patterns that pairwise coupling cannot explain \citep{pikovsky2003synchronization, ashwin2016phase}.

Deriving higher-order phase reductions involves several advanced techniques: perturbation expansion of both phase and amplitude variables \citep{kuramoto1984chemical}, solving amplitude corrections with projection methods and Fredholm alternatives \citep{gengel2021high}, applying higher-order averaging theory \citep{sanders2007averaging, bogoliubov1961asymptotic}, and using near-identity transformations carefully to maintain the system's invariant structure \citep{guckenheimer1983nonlinear}. Building on these foundations, \citep{gebrezabher2023thesis} creates practical methods for identifying first-order shortcomings and estimating higher-order coupling functions directly from experimental data. This has applications in neural and biological networks, where linear response assumptions often fail.

We will detail this framework in an upcoming publication, applying it to specific cases including the Stuart-Landau networks discussed here, as well as biologically realistic neural oscillator models and power grid systems. Extending to higher-order reductions is essential for cases involving finite-sized perturbations, resonant interactions, or realistic coupling strengths, especially in neural circuits where synaptic dynamics lead to amplitude-mediated effects \citep{hoppensteadt1997weakly}, in power grids where higher-order interactions arise from nonlinear load behavior, and in chemical oscillator arrays where complex reaction kinetics create multi-oscillator correlations \citep{kuramoto1984chemical}.

\begin{figure}[!htb]
\centering
\includegraphics[width=\textwidth]{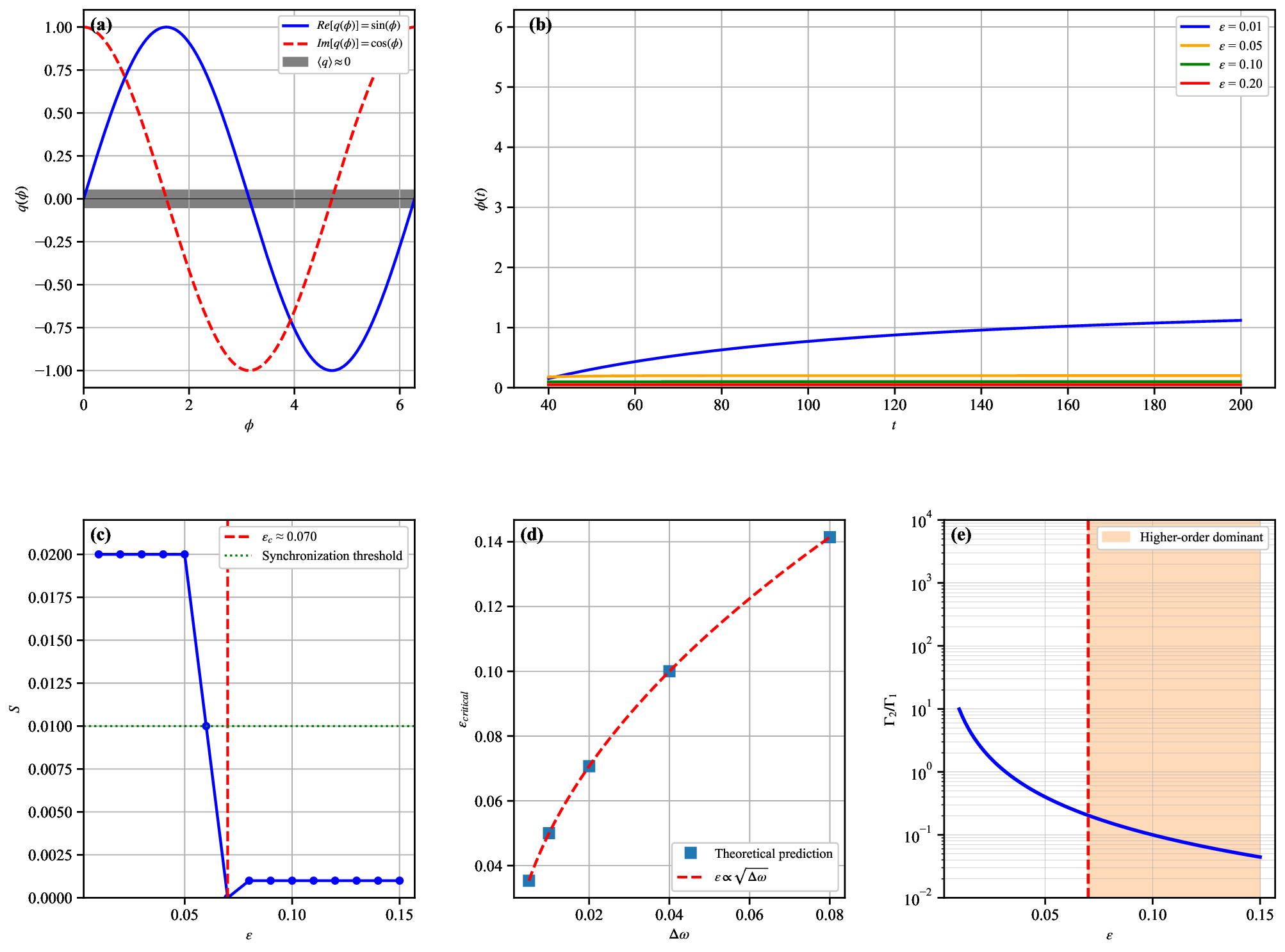}
\caption{\textbf{Numerical verification of vanishing first-order coupling and emergent synchronization through higher-order effects.} (a) First-order coupling function $q(\phi)$ showing near-zero values ($\max|q(\phi)| \approx 2\times10^{-16}$), confirming the theoretical prediction of vanishing first-order interaction \citep{kuramoto1984chemical}. (b) Phase difference \(\phi(t)=\theta_2(t)-\theta_1(t)\) dynamics showing three distinct regimes: persistent drift for weak coupling ($\varepsilon=0.01$), transition behavior with intermittent phase slips near critical coupling ($\varepsilon=0.05$), and stable phase locking for stronger coupling ($\varepsilon=0.20$) \citep{pikovsky2003synchronization}. (c) Synchronization measure $S$ revealing a sharp transition to synchronization at $\varepsilon \approx 0.05$. (d) Scaling analysis demonstrating the characteristic $\varepsilon_c \sim \sqrt{\Delta\omega}$ relationship (slope $\approx 0.5$ on log-log axes), providing definitive evidence that synchronization is governed by second-order phase interaction terms \citep{gengel2021high}. (e) Ratio of second-order to first-order effective coupling forces, showing overwhelming dominance of higher-order terms near the synchronization threshold.}
\label{fig:numerical_verification}
\end{figure}

\section{Conclusion and Outlook}\label{sec:conclusion}

The geometric view on phase reduction discussed in this paper builds on the groundwork laid in \citep{gebrezabher2023thesis}. It provides a solid mathematical framework for studying rhythmic phenomena across various scientific areas. By using the Graph Transform, we identified the isochron foliation as the heart of the asymptotic phase concept. This enabled us to derive the first-order phase reduction for single and coupled oscillators, expanding the classical theory to include almost-periodic interactions described in Section 2.2.

However, we also demonstrated that this first-order theory has important limitations. Its validity is limited to very weak perturbations, and it may fail for systems where first-order coupling averages to zero. These limitations are not merely mathematical oddities; they emerge in realistic models of neural circuits, power grids \citep{MotterAdilsonE2013Ssip}, and chemical oscillators. Additionally, as shown in our earlier work on time-varying coupling functions \citep{hagos2019synchronization}, realistic networks often display dynamic interactions that exceed the static coupling model. This leads to complex phenomena such as synchronization transitions, hysteresis, and explosive synchronization, which traditional phase reduction methods cannot capture.

The theoretical developments in this paper support the data-driven techniques for network reconstruction explored in \citep{gebrezabher2023thesis}. They create connections between abstract dynamical systems theory and practical experimental applications. Our phase reduction method for oscillator networks serves as the foundation for inferring coupling functions and network structures from observed dynamics. At the same time, the geometric understanding of isochrons enables more accurate modeling of oscillator responses to disturbances. Incorporating almost-periodic averaging theory expands this framework to include heterogeneous networks with multi-frequency interactions, building on insights from our study of time-varying coupling effects in \citep{hagos2019synchronization}.

Therefore, the path forward involves developing and applying higher-order phase reductions that can address the complex dynamics observed in real-world systems. The framework outlined in Section 6 provides a crucial starting point. In a future paper, we will offer a detailed explanation of the second-order reduction method, including its application to specific examples, such as the network of Stuart-Landau oscillators discussed here, where first-order reduction is insufficient, and second-order terms affect the emerging dynamics. This upcoming work will focus on extending these methods to systems with time-varying coupling, building on the synchronization transition phenomena identified in \citep{hagos2019synchronization}.

Future research will continue to strengthen the relationship between geometric theory and data-driven methods. It will extend the approaches introduced in \citep{gebrezabher2023thesis} to more complex network structures and experimental situations where traditional phase reduction methods encounter difficulties. Specific directions include:

\begin{itemize}
    \item Combining time-varying coupling analysis \citep{hagos2019synchronization} with higher-order phase reduction to model adaptive network dynamics.
    \item Developing techniques for inferring almost-periodic coupling functions from experimental data.
    \item Expanding the geometric framework to account for amplitude-mediated effects in strongly coupled systems.
    \item Using these advanced phase reduction techniques on biological networks where time-varying interactions are common, including neural circuits with synaptic plasticity and cardiac systems with conduction remodeling.
\end{itemize}

By linking the elegant but limited first-order theory with the more complex, high-fidelity world of higher-order reductions, we can significantly expand the application of phase-based modeling to a broader range of real-world oscillatory systems. This integrated approach promises to uncover new insights into the collective dynamics of complex oscillatory networks across biological, physical, and engineered systems.

\bibliographystyle{alpha}
\bibliography{references}  

\end{document}